\newtheorem{theorem}{Theorem} % \newtheorem一定要放在hyperref后边，否则可能造成\autoref及\cref的指向位置不正确
\newtheorem{corollary}{Corollary}
\theoremstyle{definition}
\newtheorem{definition}{Definition}
\newtheorem{remark}{Remark}
\newtheorem{proposition}{Proposition}
\def\th@plain{%
  \upshape %\itshape % Use upright font in the body of the theorem
}
\newcommand{\etal}{et~al.\ }
\renewenvironment{proof}[1][\proofname]{\par
  \pushQED{\qed}%
  \normalfont \topsep6\p@\@plus6\p@\relax
  \trivlist
  \item[\hskip\labelsep
        \bfseries
    #1\@addpunct{.}]\ignorespaces
}{%
  \popQED\endtrivlist\@endpefalse
}
\begin{document}

\title{Variable degeneracy on toroidal graphs}
\author{Rui Li\footnote{School of Mathematics and Statistics, Henan University, Kaifeng, 475004, P. R. China} \and Tao Wang\footnote{Center for Applied Mathematics, Henan University, Kaifeng, 475004, P. R. China. {\tt Corresponding
author: wangtao@henu.edu.cn; https://orcid.org/0000-0001-9732-1617} } }

\date{}
\maketitle
\begin{abstract}
DP-coloring was introduced by Dvo{\v{r}}{\'{a}}k and Postle as a generalization of list coloring and signed coloring. A new coloring, strictly $f$-degenerate transversal, is a further generalization of DP-coloring and $L$-forested-coloring. In this paper, we present some structural results on planar and toroidal graphs with forbidden configurations, and establish some sufficient conditions for the existence of strictly $f$-degenerate transversal based on these structural results. Consequently, (i) every toroidal graph without subgraphs isomorphic to the configurations in \cref{NOA} is DP-$4$-colorable, and has list vertex arboricity at most $2$, (ii) every toroidal graph without $4$-cycles is DP-$4$-colorable, and has list vertex arboricity at most $2$, (iii) every planar graph without subgraphs isomorphic to the configurations in \cref{E} is DP-$4$-colorable, and has list vertex arboricity at most $2$. These results improve upon previous results on DP-$4$-coloring [Discrete Math. 341~(7) (2018) 1983--1986; Bull. Malays. Math. Sci. Soc. 43~(3) (2020) 2271--2285] and (list) vertex arboricity [Discrete Math. 333 (2014) 101--105; Int. J. Math. Stat. 16~(1) (2015) 97--105; Iranian Math. Soc. 42~(5) (2016) 1293--1303]. 

\textbf{Keywords}: Variable degeneracy; Toroidal graphs; DP-coloring

\textbf{MSC2020}: 05C15; 05C10
\end{abstract}

\section{Introduction}
All graphs considered in this paper are finite, undirected and simple. An \textbf{$L$-forested-coloring} of $G$ for a list assignment $L$ is a coloring $\phi$ (not necessarily proper) such that $\phi(v) \in L(v)$ for each $v \in V(G)$ and each color class induces a forest. The \textbf{list vertex arboricity} of a graph $G$ is the least integer $k$ such that $G$ has an $L$-forested-coloring for any list $k$-assignment $L$. The list vertex arboricity of a graph is the list version of vertex arboricity. 

Let $\mathbb{Z}^{*}$ denote the set of nonnegative integers, and let $f$ be a function mapping $V(G)$ to $\mathbb{Z}^{*}$. A graph $G$ is said to be \textbf{strictly $f$-degenerate} if every nonempty subgraph $\Gamma$ of $G$ contains a vertex $v$ such that $\deg_{\Gamma}(v) < f(v)$. A \textbf{cover} of a graph $G$ is a graph $H$ with vertex set $V(H) = \bigcup_{v \in V(G)} L_{v}$, where $L_{v} = \{\,(v, 1), (v, 2), \dots, (v, s)\,\}$; the edge set $\mathscr{M} = \bigcup_{uv \in E(G)}\mathscr{M}_{uv}$, where $\mathscr{M}_{uv}$ is a matching between $L_{u}$ and $L_{v}$. Note that $L_{v}$ is an independent set in $H$ and $\mathscr{M}_{uv}$ may be an empty set. This definition is slightly different from Bernshteyn and Kostochka's \cite{MR3803061}, but consistent with Schweser's \cite{MR4020554}. A vertex subset $R \subseteq V(H)$ is a \textbf{transversal} of $H$ if $|R \cap L_{v}| = 1$ for each $v \in V(G)$. 

Let $H$ be a cover of $G$ and $f$ be a function mapping $V(H)$ to $\mathbb{Z}^{*}$. We refer to the pair $(H, f)$ as a \textbf{valued cover} of $G$. Let $S$ be a subset of $V(G)$, and $H_{S}$ denote the induced subgraph $H[\bigcup_{v \in S} L_{v}]$. A transversal $R$ is a \textbf{strictly $f$-degenerate transversal} if $H[R]$ is strictly $f$-degenerate.  

Let $H$ be a cover of $G$, and let $f$ be a function mapping $V(H)$ to $\{0, 1\}$. An \textbf{independent transversal}, also known as a \textbf{DP-coloring}, of $H$ is a strictly $f$-degenerate transversal of $H$. It is observed that a DP-coloring is a special type of independent set in $H$. The concept of DP-coloring, also known as \textbf{correspondence coloring}, was introduced by Dvo{\v{r}}{\'{a}}k and Postle \cite{MR3758240}. Furthermore, it has been shown \cite{MR4357325} that the concept of strictly $f$-degenerate transversal generalizes several other coloring concepts, such as list coloring, $(f_{1}, f_{2}, \dots, f_{s})$-partition, signed coloring, DP-coloring, and $L$-forested-coloring. For some related results on this topic, we refer the reader to \cite{MR4345150,MR4114324,MR4401835}. 

The \textbf{DP-chromatic number $\chi_{\mathrm{DP}}(G)$} of a graph $G$ is the least integer $k$ such that $(H, f)$ has a DP-coloring whenever $H$ is a cover of $G$ and $f$ is a function mapping $V(H)$ to $\{0, 1\}$ with $f(v, 1) + f(v, 2) + \dots + f(v, s) \geq k$ for each $v \in V(G)$. A graph $G$ is \textbf{DP-$k$-colorable} if its DP-chromatic number is at most $k$. 

Dvo{\v{r}}{\'{a}}k and Postle \cite{MR3758240} introduced a non-trivial application of DP-coloring to solve a longstanding conjecture by Borodin \cite{MR3004485}, showing that every planar graph without cycles of lengths $4$ to $8$ is $3$-choosable. The other application of DP-coloring can be found in \cite{MR3803061}, Bernshteyn and Kostochka extended Dirac's theorem on the minimum number of edges in critical graphs to Dirac's theorem on the minimum number of edges in DP-critical graphs, yielding a solution to the problem posed by Kostochka and Stiebitz \cite{MR1883593}. 

%A graph $G$ is \textbf{DP-degree-colorable} if $(H, f)$ has a DP-coloring whenever $f$ is a function mapping $V(H)$ to $\{0, 1\}$ and $f(v, 1) + f(v, 2) + \dots + f(v, s) \geq \deg_{G}(v)$ for each $v \in V(G)$. A \textbf{GDP-tree} is a connected graph in which every block is either a cycle or a complete graph. Bernshteyn, Kostochka, and Pron \cite{MR3686937} gave a Brooks' type result for DP-coloring. More detailed characterization of DP-degree-colorable multigraphs can be found in \cite{MR3948125}. 
%
%\begin{theorem}[Bernshteyn, Kostochka and Pron \cite{MR3686937}]\label{DL}
%Let $G$ be a connected graph. The graph $G$ is not DP-degree-colorable if and only if $G$ is a GDP-tree. 
%\end{theorem}

Dvo{\v{r}}{\'{a}}k and Postle \cite{MR3758240} showed that every planar graph is DP-$5$-colorable, and observed that $\chi_{\mathrm{DP}}(G) \leq k + 1$ if $G$ is $k$-degenerate. Thomassen \cite{MR1290638} showed that every planar graph is $5$-choosable, and Voigt \cite{MR1235909} showed that there are planar graphs that are not $4$-choosable. Thus it is interesting to give sufficient conditions for planar graphs to be $4$-choosable. Additionally, as a generalization of list coloring, it is also interesting to give sufficient conditions for planar graphs to be DP-$4$-colorable. Kim and Ozeki \cite{MR3802151} showed that for each $k \in \{3, 4, 5, 6\}$, every planar graph without $k$-cycles is DP-$4$-colorable. Two cycles are \textbf{adjacent} if they share at least one edge. Kim and Yu \cite{MR3969022} showed that every planar graph without triangles adjacent to $4$-cycles is DP-$4$-colorable. Sittitrai and Nakprasit \cite{MR4089638} showed that every planar graph without pairwise adjacent $3$-, $4$-, and $5$-cycles is DP-$4$-colorable. For  additional results on DP-colorings, we refer the reader to \cite{MR3679840, MR3518419, MR3889157, MR4043754, MR3957361}. 

Raspaud and Wang \cite{MR2408378} showed that every planar graph without $4$-cycles has vertex arboricity at most $2$. A graph is \textbf{toroidal} if it can be embedded in the torus. Since any graph that can be embedded in the plane can be embedded in the torus, every planar graph is also a toroidal graph. Choi and Zhang \cite{MR3233411} showed that every toroidal graph without $4$-cycles has vertex arboricity at most $2$, which improves the result in \cite{MR2408378}. Zhang \cite{MR3570576} showed that every toroidal graph without $5$-cycles has list vertex arboricity at most $2$. Huang \etal \cite{MR3320048} showed that every toroidal graph without $3$-cycles adjacent to $5$-cycles has list vertex arboricity at most $2$. Chen \etal \cite{MR3508765} showed that every toroidal graph without $3$-cycles adjacent to $4$-cycles has list vertex arboricity at most $2$. 

Consider a graph $G$ and a valued cover $(H, f)$ of $G$. The pair $(H, f)$ is said to be \textbf{minimal non-strictly $f$-degenerate} if $H$ does not have a strictly $f$-degenerate transversal, but $(H - L_{v}, f)$ has a strictly $f$-degenerate transversal for any $v \in V(G)$. A \textbf{$k$-cap} is a chordless $k$-cycle together with a new vertex that is adjacent to precisely two adjacent vertices on the cycle. A \textbf{$k$-cap-subgraph} $F_{k}$ of a graph $G$ is a subgraph isomorphic to the $k$-cap, where all vertices having degree $4$ in $G$, see an illustration of $F_{5}$ in \cref{F35}. 

\begin{figure}
    \centering
    \begin{tikzpicture}
        \coordinate (A) at (72:1);
        \coordinate (B) at (144:1);
        \coordinate (C) at (216:1);
        \coordinate (D) at (288:1);
        \coordinate (E) at (0:1);
        \coordinate (B') at (180:1.15);

        % Draw the pentagon and the connecting edge
        \draw (A) node[right]{\small$4$} -- (B) node[above]{\small$4$} -- (C) node[below]{\small$4$} -- (D) node[right]{\small$4$} -- (E) node[right]{\small$4$} -- cycle;
        \draw (B) -- (B') node[left]{$4$} -- (C);

        % Draw the vertices
        \foreach \pt in {A, B, C, D, E, B'} {
            \node[rectangle, inner sep = 2, fill, draw] at (\pt) {};
        }
    \end{tikzpicture}
    \caption{The $5$-cap-subgraph.}
    \label{F35}
\end{figure}
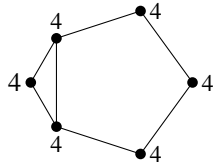

In \cref{sec:ML}, we show the following structural result on certain toroidal graphs. 

\begin{figure}
    \centering
    \subcaptionbox{\label{fig:subfig:a-}}{%
        \begin{tikzpicture}
        \coordinate (A) at (45:1);
        \coordinate (B) at (135:1);
        \coordinate (C) at (225:1);
        \coordinate (D) at (-45:1);
        \coordinate (H) at (90:1.414);
        \draw (A)--(H)--(B)--(C)--(D)--cycle;
        \draw (A)--(B);
        \foreach \pt in {A, B, C, D, H} {
            \node[circle, inner sep = 1.5, fill = white, draw] at (\pt) {};
        }
        \end{tikzpicture}% 是否加这个%，两个图之间的间距会有所不同
    }\hspace{1.5cm}%
    \subcaptionbox{\label{fig:subfig:b-}}{%
        \begin{tikzpicture}
        \coordinate (A) at (30:1);
        \coordinate (B) at (150:1);
        \coordinate (C) at (225:1);
        \coordinate (D) at (-45:1);
        \coordinate (H) at (90:1.414);
        \coordinate (X) at (60:1.4);
        \coordinate (Y) at (120:1.4);
        \draw (A)--(X)--(H)--(Y)--(B)--(C)--(D)--cycle;
        \draw (A)--(H)--(B);
        \foreach \pt in {A, B, C, D, H, X, Y} {
            \node[circle, inner sep = 1.5, fill = white, draw] at (\pt) {};
        }
\end{tikzpicture}}
\caption{Forbidden configurations in \cref{MLONE,MRONEPLANAR,MRONE}.}
\label{NOA}
\end{figure}

\begin{restatable}{theorem}{MLONE}\label{MLONE}
A connected toroidal graph that does not include subgraphs isomorphic to the configurations as depicted in \cref{NOA}, has minimum degree at most $3$, unless it is a $2$-connected $4$-regular graph with Euler characteristic $\epsilon(G) = 0$. 
\end{restatable}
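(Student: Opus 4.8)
I would prove the equivalent statement: \emph{if $G$ is connected and toroidal, contains no subgraph isomorphic to either configuration in \autoref{NOA}, and has $\delta(G)\ge 4$, then $G$ is $2$-connected, $4$-regular, and $\epsilon(G)=0$.} Fix an embedding of $G$ in the torus with face set $F$ and set $\epsilon(G)=|V(G)|-|E(G)|+|F|$; recall that a connected graph embedded in the torus satisfies $|V|-|E|+|F|\ge 0$, with equality exactly when the embedding is cellular. The three claims are proved in order: $2$-connectedness, then a discharging argument that delivers $\epsilon(G)=0$ and $4$-regularity together.

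For $2$-connectedness, suppose instead that $G$ has a cut-vertex, and let $B$ be an end-block with attaching vertex $w$. Then $\deg_B(u)=4$ for every $u\in V(B)\setminus\{w\}$ and $\deg_B(w)\in\{2,3\}$, while $B$ is $2$-connected, toroidal, and still avoids both configurations. A direct analysis of $B$ — restricting an embedding and applying Euler's formula on a cellular embedding of $B$ in the torus, or the planar analogue of the present result when $B$ is planar, and tracking the triangles and short cycles that the near-$4$-regularity of $B$ forces near $w$ — locates a copy of \autoref{fig:subfig:a-} or \autoref{fig:subfig:b-}, a contradiction. So $G$ is $2$-connected, and every face of the embedding is bounded by a cycle.

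Now give each vertex $v$ the charge $\mu(v)=\deg_G(v)-4$ and each face $\phi$ the charge $\mu(\phi)=\deg(\phi)-4$. Since $\sum_{\phi\in F}\deg(\phi)=2|E|$,
\[
\sum_{v\in V(G)}\mu(v)+\sum_{\phi\in F}\mu(\phi)=\bigl(2|E|-4|V|\bigr)+\bigl(2|E|-4|F|\bigr)=-4\,\epsilon(G)\le 0 .
\]
Because $\delta(G)\ge 4$, every vertex carries nonnegative charge, and the only negatively charged objects are triangular faces, each with charge $-1$. I would fix discharging rules in which vertices of degree $\ge 5$ and faces of size $\ge 5$ pass fixed fractional amounts of charge to nearby triangular faces, and prove that after redistribution every vertex and every face has nonnegative charge. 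This is the technical heart of the argument. For a triangular face $T$, excluding \autoref{fig:subfig:a-} forbids any edge of $T$ from bounding a $4$-face (equivalently, from lying in a second triangle with a fresh apex), which forces each side of $T$ to abut a face of size $\ge 5$ or to have an endpoint of degree $\ge 5$; excluding \autoref{fig:subfig:b-} controls how triangular faces may cluster about a common vertex and how a $5$-face may be wedged between two of them, and this is exactly what prevents any $5$-face or degree-$5$ vertex from being overdrawn. One must enumerate the finitely many local pictures at a triangular face and at each potential donor and check the arithmetic in each; this case analysis — driven entirely by the two excluded configurations — is where essentially all of the work lies, the delicate point being to keep $5$-faces solvent while they feed their adjacent triangles.

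Finally, once every final charge is nonnegative, the displayed identity shows that $-4\,\epsilon(G)$ is the sum of all final charges and is hence $\ge 0$, so $\epsilon(G)=0$; the embedding is therefore cellular and the total charge is exactly $0$, which forces every final charge to be $0$. Reading off the equality case of the discharging rules then pins down the local structure at every vertex and face, and with the two excluded configurations it forces every vertex to have degree exactly $4$. Together with the $2$-connectedness already established and $\epsilon(G)=0$, this is the conclusion. I expect the main obstacle to be precisely this nonnegativity verification in the discharging step — choosing rules under which \autoref{fig:subfig:a-} and \autoref{fig:subfig:b-} alone certify that no donor is overdrawn — and then cleanly extracting $4$-regularity from the all-zero equality case.
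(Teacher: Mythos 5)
Your overall strategy---charges $\deg(v)-4$ and $\deg(\phi)-4$ summing to $-4\epsilon(G)\le 0$, discharging to make every final charge nonnegative, then reading off $4$-regularity and $\epsilon(G)=0$ from the all-zero equality case---is exactly the paper's. But as written there are two genuine gaps. First, the discharging itself, which you yourself identify as ``where essentially all of the work lies,'' is not carried out: no rules are specified and no case is checked. The rules that work are quite specific (a $3$-face adjacent to three $5^{+}$-faces takes $\frac{1}{3}$ from each; a $3$-face sharing an edge $w_{1}w_{2}$ with another $3$-face takes $\frac{1}{2}$ from each adjacent $5^{+}$-face when $\deg(w_{1})=\deg(w_{2})=4$, and otherwise $\frac{1}{3}$ from each adjacent $5^{+}$-face and $\frac{1}{3}$ from each $5^{+}$-vertex among $w_{1},w_{2}$), and the verification leans on precise consequences of the exclusions: configuration \autoref{fig:subfig:a-} is a $5$-cycle with a chord, so its absence shows each $3$-face is adjacent to no $4$-face and to at most one $3$-face---note that your parenthetical ``equivalently, from lying in a second triangle with a fresh apex'' is wrong, since two triangles sharing an edge form a chorded $4$-cycle, not a chorded $5$-cycle, and adjacent triangles are precisely the case the second rule must handle; configuration \autoref{fig:subfig:b-} caps the number of $3$-faces adjacent to a $5$-face at two; and an averaging argument shows each $6^{+}$-face sends at most $\frac{1}{3}$ per incident edge, with $7^{+}$-faces and $5^{+}$-vertices ending strictly positive. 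Without some such concrete scheme, ``every final charge is nonnegative'' is an assertion, not a proof.

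Second, your $2$-connectedness step is both misplaced and incorrectly premised. Before any discharging you cannot assert $\deg_{B}(u)=4$ for $u\in V(B)\setminus\{w\}$ (only $\ge 4$) nor $\deg_{B}(w)\in\{2,3\}$ (if $\deg_{G}(w)\ge 5$ then $\deg_{B}(w)$ may be larger), and the claimed ``direct analysis'' locating a forbidden configuration inside an end-block is never exhibited; it is not at all clear one exists at this stage. The clean route is to extract $2$-connectedness \emph{after} the equality case: once all final charges are zero, $G$ is $4$-regular and every face has degree at most $6$ (because $7^{+}$-faces and $5^{+}$-vertices end strictly positive); a cut-edge would then lie on an $8^{+}$-face, and a cut-vertex $w$ not on a cut-edge forces a $6$-face traversing $w$ twice whose boundary edges lie on no $3$-face, so that face retains positive charge---both contradictions. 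You should reorder your argument this way and drop the end-block analysis.
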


Building upon this structural result, along with other notable structural results on critical graphs (specifically, those concerning minimal non-strictly $f$-degenerate graphs), we can easily obtain the following theorem for a certain class of planar graphs. 

\begin{restatable}{theorem}{MRONEPLANAR}\label{MRONEPLANAR}
Let $G$ be a planar graph without subgraphs isomorphic to the configurations in \cref{NOA}, and let $(H, f)$ be a valued cover of $G$. If $f(v, 1) + f(v, 2) + \dots + f(v, s) \geq 4$ for each $v \in V(G)$, then $H$ has a strictly $f$-degenerate transversal. 
\end{restatable}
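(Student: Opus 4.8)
The plan is to argue by contradiction, extract a minimal counterexample, and then apply the structural result \autoref{MLONE}. Assume the theorem fails and, among all witnesses — a planar graph $G$ with no subgraph isomorphic to a configuration in \autoref{NOA} together with a valued cover $(H,f)$ satisfying $f(v,1)+\dots+f(v,\kappa)\geq 4$ for all $v$ and admitting no strictly $f$-degenerate transversal — choose one with $|V(G)|$ minimum. I would first observe that $G$ is connected (otherwise some component, equipped with the induced valued cover, is a smaller witness, since a strictly $f$-degenerate transversal of $H$ exists iff one exists on each component) and that $(H,f)$ is minimal non-strictly-$f$-degenerate: for every $v$, the pair $(H-L_{v},f)$ is a valued cover of the smaller graph $G-v$, which inherits all hypotheses, and hence has a strictly $f$-degenerate transversal. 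Throughout I would use the standard reformulation that $H[R]$ is strictly $f$-degenerate exactly when its vertices can be listed so that each has fewer neighbours than its $f$-value among the vertices listed after it — equivalently, $H[R]$ reduces to the empty graph by repeatedly deleting a vertex whose current degree is below its $f$-value.

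The heart of the argument is the reducibility claim $\delta(G)\geq 4$. Suppose some $v$ has $d:=\deg_{G}(v)\leq 3$, and let $R'$ be a strictly $f$-degenerate transversal of $H-L_{v}$. For $1\leq i\leq\kappa$ put $d_{i}=|N_{H}((v,i))\cap R'|$. Each neighbour $u$ of $v$ meets $R'$ in a single vertex, and since $\mathscr{M}_{uv}$ is a matching that vertex is joined to at most one $(v,i)$; summing over $u\in N_{G}(v)$ gives $\sum_{i=1}^{\kappa}d_{i}\leq d\leq 3<4\leq\sum_{i=1}^{\kappa}f(v,i)$, so some index $i_{0}$ satisfies $d_{i_{0}}<f(v,i_{0})$. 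Then $R:=R'\cup\{(v,i_{0})\}$ is a transversal of $H$: delete $(v,i_{0})$ first (it has only $d_{i_{0}}<f(v,i_{0})$ neighbours in $R$), which leaves $H[R']$, and then finish with a reduction witnessing the strict $f$-degeneracy of $H[R']$; hence $H[R]$ is strictly $f$-degenerate, contradicting the choice of $(H,f)$. Therefore every vertex of $G$ has degree at least $4$.

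Finally, $G$ is a connected toroidal graph (being planar) with no subgraph isomorphic to a configuration in \autoref{NOA}, so \autoref{MLONE} applies. Since $G$ is planar and connected, Euler's formula gives $\epsilon(G)=2\neq 0$, so $G$ is not the exceptional $2$-connected $4$-regular graph, and \autoref{MLONE} forces $\delta(G)\leq 3$. This contradicts the previous paragraph, so no counterexample exists.

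The genuinely hard work here is not in this deduction but in \autoref{MLONE}, whose proof — the discharging analysis of \autoref{sec:ML} — is where the forbidden configurations of \autoref{NOA} are actually exploited. Within the present argument the only point demanding care is the extension step: one cannot simply bound degrees in the subgraphs of $H[R]$, since that yields only $\leq f$ rather than $<f$, so it is essential to propagate an explicit elimination ordering of $H[R']$. I would also expect the toroidal counterpart \autoref{MRONE} to resist being closed off this cheaply, precisely because there the $4$-regular, $\epsilon(G)=0$ exception of \autoref{MLONE} is genuinely attainable and must be treated by a separate argument rather than dismissed via Euler's formula.
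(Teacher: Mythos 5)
Your proposal is correct and follows essentially the same route as the paper: a vertex-minimal counterexample is connected and minimal non-strictly $f$-degenerate, the structural result \autoref{MLONE} combined with $\epsilon(G)=2$ for connected planar graphs forces $\delta(G)\leq 3$ (the paper packages this as \autoref{3D}), and this contradicts the lower bound $\delta(G)\geq 4$. The only cosmetic difference is that you prove the degree bound by hand via the matching-counting and elimination-ordering argument, whereas the paper simply invokes \autoref{L}\ref{M1} from \cite{Lu2019a}, which encapsulates exactly that reducibility step.
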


For toroidal graphs without subgraphs isomorphic to the configurations in \cref{NOA}, we have the following result. 
\begin{restatable}{theorem}{MRONE}\label{MRONE}
Let $G$ be a toroidal graph without subgraphs isomorphic to the configurations in \cref{NOA}, and let $(H, f)$ be a valued cover of $G$. If $f(v, 1) + f(v, 2) + \dots + f(v, s) \geq 4$ for each $v \in V(G)$, and $(H_{S}, f)$ is not a monoblock whenever $G[S]$ is a 2-connected $4$-regular graph, then $H$ has a strictly $f$-degenerate transversal. 
\end{restatable}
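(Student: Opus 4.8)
The plan is a minimal-counterexample argument resting on \autoref{MLONE} and on the structural analysis of minimal non-strictly-$f$-degenerate valued covers carried out in \autoref{sec:ML}. Suppose the statement is false and, among all counterexamples, choose a valued cover $(H, f)$ of a toroidal graph $G$ with $|V(G)|$ as small as possible: so $G$ contains no subgraph isomorphic to a configuration of \autoref{NOA}, $f(v, 1) + \dots + f(v, \kappa) \geq 4$ for every $v \in V(G)$, the pair $(H_S, f)$ is not a monoblock whenever $G[S]$ is $2$-connected and $4$-regular, and yet $H$ admits no strictly $f$-degenerate transversal. First I would check that $G$ is connected: if $G = G_1 \cup \dots \cup G_t$ with $t \geq 2$, then each $G_j$ is toroidal, avoids the configurations of \autoref{NOA}, satisfies the degree-sum hypothesis, and has the property that any $2$-connected $4$-regular induced subgraph of $G_j$ is one of $G$ (so $(H_S,f)$ is not a monoblock for those $S$); hence by minimality each $H_{V(G_j)}$ has a strictly $f$-degenerate transversal $R_j$, and since $H$ has no edge between distinct $H_{V(G_j)}$'s, the union $R_1 \cup \dots \cup R_t$ is a strictly $f$-degenerate transversal of $H$ — a contradiction.

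Next I would prove $\delta(G) \geq 4$. If $\deg_G(v) \leq 3$, then $G - v$ inherits all the hypotheses, so by minimality $H - L_v$ has a strictly $f$-degenerate transversal $R'$. Writing $d_i$ for the number of neighbours of $(v, i)$ in $R'$, and using that each matching $\mathscr{M}_{uv}$ with $u \in N_G(v)$ contributes at most one such neighbour, we get $\sum_{i=1}^{\kappa} d_i \leq \deg_G(v) \leq 3 < 4 \leq \sum_{i=1}^{\kappa} f(v, i)$, so some index $i$ has $d_i < f(v, i)$; then $R' \cup \{(v, i)\}$ is a strictly $f$-degenerate transversal of $H$, since any nonempty subgraph of $H[R' \cup \{(v,i)\}]$ containing $(v,i)$ has $\deg(v,i) \leq d_i < f(v,i)$, and any other lies in $H[R']$ — again a contradiction. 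The identical extension step also rules out $\sum_i f(v, i) > \deg_G(v)$, so once $G$ turns out to be $4$-regular we automatically have $\sum_i f(v, i) = 4 = \deg_G(v)$ for every $v$.

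Now \autoref{MLONE}, together with the last two paragraphs, forces $G$ to be a $2$-connected $4$-regular toroidal graph with $\epsilon(G) = 0$. By minimality once more, $H - L_v$ has a strictly $f$-degenerate transversal for every $v$ (as $G - v$ satisfies all hypotheses), so $(H, f)$ is minimal non-strictly-$f$-degenerate and $\sum_i f(v, i) = \deg_G(v)$ holds throughout. At this point I would invoke the description of minimal non-strictly-$f$-degenerate valued covers developed in \autoref{sec:ML} — the analogue in this setting of \autoref{DL} — whose block-type decomposition, combined with the $2$-connectedness of $G$, forces $(H, f)$ itself to be a monoblock. Taking $S = V(G)$, this contradicts the hypothesis that $(H_S, f)$ is not a monoblock when $G[S]$ is $2$-connected and $4$-regular, completing the proof.

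I expect the main obstacle to be precisely that last step: establishing the exact block structure of a minimal non-strictly-$f$-degenerate cover over a $2$-connected $4$-regular toroidal graph with no configuration of \autoref{NOA}, and showing that it can only be a monoblock. This is the work that \autoref{sec:ML} must supply (both the discharging proof of \autoref{MLONE} and the GDP-tree-style characterization behind the monoblock notion), and it is also the reason the monoblock caveat is needed in the toroidal statement but is unnecessary in the planar \autoref{MRONEPLANAR}: a connected planar graph has $\epsilon(G) = 2 \neq 0$, so the exceptional case of \autoref{MLONE} cannot arise there and the bare reduction of the first two paragraphs already closes the argument.
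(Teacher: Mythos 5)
Your proposal is correct and follows essentially the same route as the paper: a minimal counterexample is connected, has minimum degree at least four by the greedy extension argument (this is \autoref{L}\ref{M1}), is therefore a $2$-connected $4$-regular graph with $\epsilon(G)=0$ by \autoref{MLONE}, and the final contradiction comes from the constructibility characterization forcing $(H,f)$ to be a monoblock. The one step you flag as the ``main obstacle'' is not developed in \autoref{sec:ML} but is exactly the cited \autoref{MR} of Lu, Wang and Wang in the preliminaries: for a $2$-connected graph that is neither a cycle nor a complete graph, a constructible cover must be a building cover of type (i), i.e.\ a monoblock, so taking $S=V(G)$ contradicts the hypothesis.
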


%By setting $f_{1} \equiv 1$ and $f_{2} \equiv 3$, or $f_{1} \equiv f_{2} \equiv 2$, we immediately obtain the following result on partitionable.   
%\begin{theorem}
%If $G$ is a toroidal graph without $5$-cycles adjacent to $3$-cycles, then $G$ is $(1, 3)$-partitionable and $(2, 2)$-partitionable. 
%\end{theorem}

\begin{figure}%
\centering
\subcaptionbox{\label{fig:subfig:a--}}
{\begin{tikzpicture}
\coordinate (A) at (45:1);
\coordinate (B) at (135:1);
\coordinate (C) at (225:1);
\coordinate (D) at (-45:1);
\coordinate (H) at (90:1.414);
\draw (A)--(H)--(B)--(C)--(D)--cycle;
\draw (A)--(B);
        \foreach \pt in {A, B, C, D, H} {
            \node[circle, inner sep = 1.5, fill = white, draw] at (\pt) {};
        }
\end{tikzpicture}}\hspace{1.5cm}
\subcaptionbox{\label{fig:subfig:b--}}
{\begin{tikzpicture}
\coordinate (A) at (30:1);
\coordinate (B) at (150:1);
\coordinate (C) at (225:1);
\coordinate (D) at (-45:1);
\coordinate (H) at (90:1.414);
\coordinate (X) at (60:1.4);
\coordinate (Y) at (120:1.4);
\coordinate (T) at ($(H)!(A)!(X)$);
\coordinate (Z) at ($(A)!2!(T)$); 
\draw (A)--(X)--(Z)--(H)--(Y)--(B)--(C)--(D)--cycle;
\draw (A)--(H)--(B);
\draw (H)--(X);
        \foreach \pt in {A, B, C, D, H, X, Y, Z} {
            \node[circle, inner sep = 1.5, fill = white, draw] at (\pt) {};
        }
\end{tikzpicture}}\hspace{1.5cm}
\subcaptionbox{\label{fig:subfig:c--}}
{\begin{tikzpicture}
\coordinate (A) at (30:1);
\coordinate (B) at (150:1);
\coordinate (C) at (225:1);
\coordinate (D) at (-45:1);
\coordinate (H) at (90:1.414);
\coordinate (X) at (60:1.4);
\coordinate (Y) at (120:1.4);
\coordinate (T) at ($(A)!(H)!(X)$);
\coordinate (Z) at ($(H)!2!(T)$); 
\draw (A)--(Z)--(X)--(H)--(Y)--(B)--(C)--(D)--cycle;
\draw (X)--(A)--(H)--(B);
        \foreach \pt in {A, B, C, D, H, X, Y, Z} {
            \node[circle, inner sep = 1.5, fill = white, draw] at (\pt) {};
        }
\end{tikzpicture}}
\caption{Forbidden configurations in \cref{MLTHREE,MRTHREE}.}
\label{E}
\end{figure}

The subsequent structural conclusion can be derived from the proof of Theorem 1.9 in \cite{MR3233411}. For the sake of completeness and readability, we present a self-contained proof in \cref{sec:ML}. 
\begin{restatable}[Choi and Zhang \cite{MR3233411}]
{theorem}{MLTWO}\label{MLTWO}If $G$ is a connected toroidal graph without $4$-cycles, then either $G$ has minimum degree at most $3$, or $G$ contains a $k$-cap-subgraph for some $k \geq 5$. 
\end{restatable}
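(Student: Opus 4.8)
The plan is to argue by contradiction with a discharging argument on an embedding in the torus, exploiting that the Euler characteristic is $0$. Suppose $G$ is a connected toroidal graph without $4$-cycles, with $\delta(G)\ge 4$, and with no $\kappa$-cap-subgraph for any $\kappa\ge 5$. Embed $G$ in the torus with face set $\mathcal F$; then $|V(G)|-|E(G)|+|\mathcal F|\ge 0$, with equality only when the embedding is $2$-cell (if $G$ is planar the inequality is strict, which only makes the argument below easier). Writing $d(x)$ for the degree of a vertex $x$ and the length of a face $x$, assign the initial charge $\mu(x)=d(x)-4$ to each vertex and each face; the total charge is then $\bigl(2|E(G)|-4|V(G)|\bigr)+\bigl(2|E(G)|-4|\mathcal F|\bigr)=-4\bigl(|V(G)|-|E(G)|+|\mathcal F|\bigr)\le 0$. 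First I would record the elementary consequences of $G$ having no $4$-cycle: there is no $4$-face; no two triangular faces share an edge, so every edge of a triangular face also borders a face of length at least $5$; a vertex of degree $d$ lies on at most $\lfloor d/2\rfloor$ triangular faces, since two triangular faces at a vertex cannot use consecutive edges of its rotation; every $5$-face is chordless; and if an edge $v_1v_2$ of a chordless cycle $v_1v_2\cdots v_d$ with $d\ge 5$ is the base of a triangle with apex $u$, then $u$ is adjacent to exactly $v_1$ and $v_2$ among $v_1,\dots,v_d$, so $v_1v_2\cdots v_d$ together with $u$ is a $d$-cap.

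Next I would use the rules (R1) every vertex of degree at least $5$ sends $\tfrac12$ to each incident triangular face, and (R2) every face of length at least $5$ sends $\tfrac13$ to each adjacent triangular face, once per common edge. The feature special to the torus is that the total charge is merely $\le 0$ rather than strictly negative, so the aim is to prove that every final charge is nonnegative; this forces the total to be $0$, hence every final charge to be $0$ and the embedding to be $2$-cell, and the resulting rigid structure is then exhibited as a $\kappa$-cap-subgraph. The vertices are easy: a degree-$4$ vertex keeps $0$; a degree-$5$ vertex keeps $1-\tfrac12 t$ with $t\le 2$; a degree-$d$ vertex with $d\ge 6$ keeps at least $(d-4)-\tfrac d4=\tfrac{3d-16}{4}>0$. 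A triangular face starts at $-1$, receives $\tfrac13$ across each of its three edges by (R2) and $\tfrac12$ from each incident $\ge 5$-vertex by (R1), so it keeps $\tfrac12$ times the number of incident $\ge 5$-vertices, which is nonnegative. A face of length $d\ge 6$ keeps $d-4-\tfrac k3\ge\tfrac{2d-12}{3}\ge 0$, where $k\le d$ counts its adjacent triangular faces. Only the $5$-faces need attention: such a face keeps $1-\tfrac k3$, nonnegative once $k\le 3$, so it remains to handle a $5$-face incident to four or five triangular faces. Here the hypothesis enters: for such a face, taking any triangle-base edge $v_1v_2$ on the chordless pentagon $v_1v_2v_3v_4v_5$ with apex $u$, the pentagon together with $u$ is a $5$-cap, and if all of $v_1,\dots,v_5,u$ have degree $4$ it is a $5$-cap-subgraph, a contradiction; the remaining sub-case, in which one of these vertices has degree at least $5$, I would rule out by a short local argument or by refining (R2) so that a triangle-apex of degree at least $5$ feeds charge back into the pentagon.

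Granting that every final charge is nonnegative, the total is $0$, so every final charge is $0$ and the embedding is $2$-cell. Reading off the equality cases: no vertex has degree at least $6$ and no face has length at least $7$ (each would keep strictly positive charge); every triangular face has all three vertices of degree $4$, else it keeps at least $\tfrac12>0$; hence no vertex has degree $5$, since it would lie on exactly two triangular faces, each then containing a degree-$5$ vertex; so $G$ is $4$-regular. A $6$-face would then have all six of its edges as triangle-bases, which a short check (whether or not the hexagon has a chord) turns into a $\kappa$-cap-subgraph with $\kappa\in\{5,6\}$; so every face has length $3$ or $5$. A direct Euler count for the $4$-regular graph now rules out having only $3$-faces or only $5$-faces, so there is a triangular face, and each of its edges borders a $5$-face. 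Picking such a $5$-face $v_1v_2v_3v_4v_5$ and the triangular face across the edge $v_1v_2$, with apex $u$, chordlessness of the pentagon gives that $u$ is adjacent to exactly $v_1$ and $v_2$ among $v_1,\dots,v_5$, and $4$-regularity gives that all of $v_1,\dots,v_5,u$ have degree $4$. Hence $(v_1v_2v_3v_4v_5;u)$ is a $5$-cap-subgraph, contradicting the standing assumption; this proves the theorem.

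The step I expect to be the main obstacle is the residual case above — a pentagonal face incident to four or five triangular faces — which is precisely where the absence of a $\kappa$-cap-subgraph must do its work, and calibrating the constants in the rules so that this is the only tight configuration (rather than $6$-faces or degree-$5$ vertices) is the delicate point. A secondary complication is the bookkeeping forced by $\epsilon(G)=0$: because the total charge is $0$ and not negative, one cannot stop at ``all final charges nonnegative'' but must carry out the equality analysis of the third paragraph. Finally, graphs of low connectivity, where a face boundary need not be a cycle, need a word; this is routine, since bridges and repeated boundary vertices only reduce the charge leaving high-degree objects, and one may as well reduce to the $2$-connected case at the outset.
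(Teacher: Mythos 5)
Your overall strategy (charges $\deg(x)-4$, total $-4\epsilon(G)\le 0$, show all final charges nonnegative, then analyse the equality case) is genuinely different from the paper's, but the one step you defer --- a $5$-face adjacent to four or five triangular faces --- is the entire content of the theorem, and the patch you gesture at does not close it. Your hypothesis-free rules handle only the sub-case in which all five pentagon vertices \emph{and} all apexes have degree $4$. If all pentagon vertices have degree $4$ but some apexes have degree at least $5$, your ``feed charge back from the apex through the triangle'' idea does work (each apex is then forced to have degree $\ge 5$, the apexes are pairwise distinct since a shared apex creates a $4$-cycle, and each such triangle borders at most one such pentagon because its other two neighbouring faces contain the $5^{+}$-apex), but you must write this out and then redo the equality analysis, since a triangle that forwards its apex's surplus can now finish at $0$. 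The sub-case you cannot dismiss is a pentagon containing a $5^{+}$-vertex: then \emph{no} cap on that pentagon is forbidden, so all its apexes may have degree $4$ and the only possible source of charge is the incident $5^{+}$-vertex. Concretely, let $v$ have degree $5$ with rotation $P_{1},T_{1},P_{2},T_{2},P_{3}$ where the $T_{i}$ are triangles and the $P_{i}$ are $5$-faces, with $P_{2}$ adjacent to five triangles and $P_{1},P_{3}$ each adjacent to four, all apexes of degree $4$ and $v$ the only $5^{+}$-vertex in sight. No $4$-cycle and no $\kappa$-cap-subgraph is forced, yet the three pentagons are short by $\frac{2}{3}+\frac{1}{3}+\frac{1}{3}=\frac{4}{3}$ while $v$ has only $\mu(v)=1$ to give (and under your (R1) it has already spent all of it on $T_{1}$ and $T_{2}$). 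No rerouting of $v$'s single unit of charge through the adjacent triangles can meet a demand of $\frac{4}{3}$, so the scheme fails on a locally admissible configuration; this is a genuine obstruction, not a bookkeeping detail.

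This is presumably why the paper's proof of \autoref{MLTWO} abandons the $\deg-4$ charging altogether: it uses $\mu(v)=\deg(v)-6$ and $\mu(f)=2\deg(f)-6$, so that $5^{+}$-faces are rich (a $5$-face carries $+4$ and simply pays $\frac{4}{5}$ to each incident vertex) and the deficient objects are the $4$-vertices; the cap-free hypothesis is then used globally, via an auxiliary graph on the ``bad'' $3$-faces whose components must be trees or cycles, together with a bank per component that ends with charge $0$ on cycles and $+\frac{6}{5}$ on trees. The parts of your argument away from $5$-faces (vertices, triangles, $6^{+}$-faces, and the endgame deriving $4$-regularity, $f_{3}=f_{5}>0$, and a $5$-cap-subgraph) are sound, but to salvage your route you would need a substantially more delicate rule set in the spirit of the paper's proof of \autoref{MLTHREE} (its $5_{\mathrm{a}}/5_{\mathrm{b}}/5_{\mathrm{c}}$-faces and ``related sources''), and it is not clear that even that calibration survives under the weaker hypothesis ``no $4$-cycles'' on the torus. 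As written, the proposal does not prove the theorem.
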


Using this structural result, we formulate the subsequent theorem for toroidal graphs without $4$-cycles. 

\begin{restatable}{theorem}{MRTWO}\label{MRTWO}
Let $G$ be a toroidal graph without $4$-cycles. Let $H$ be a cover of $G$ and $f$ be a function mapping $V(H)$ to $\{0, 1, 2\}$. If $f(v, 1) + f(v, 2) + \dots + f(v, s) \geq 4$ for all $v \in V(G)$, then $H$ has a strictly $f$-degenerate transversal. 
\end{restatable}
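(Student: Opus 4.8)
The plan is to argue by contradiction, taking a counterexample $(G, H, f)$ with $|V(G)|$ minimum; among all such counterexamples we may further assume the pair is \emph{minimal non-strictly $f$-degenerate}, i.e.\ $H$ has no strictly $f$-degenerate transversal but $(H - L_v, f)$ does for every $v \in V(G)$. The first block of the argument is to establish the usual reducibility lemmas for such a minimal pair. For instance, $G$ has no vertex $v$ with $\sum_i f(v,i) > \deg_G(v)$ (or more generally with ``enough room''): otherwise delete $L_v$, extend the transversal $R'$ of $H - L_v$ greedily, since fewer than $\sum_i f(v,i)$ constraints coming from the already-colored neighbours can block all choices in $L_v$, and one checks $H[R]$ stays strictly $f$-degenerate. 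More refined versions handle vertices of degree $3$ with small $f$-sum, and the $\kappa$-cap-subgraph $F_\kappa$: if $G$ contains an $F_\kappa$ with $\kappa \ge 5$, then using the $f$-values on the cap (all those vertices have degree $4$ in $G$, and since the $f$-sum is at least $4$ over $\{0,1,2\}$-valued functions, each such vertex has $f$-sum exactly $4$ with at most two coordinates equal to $2$) one should be able to color the rest by minimality and then complete the cap, because a cap is essentially a cycle plus a pendant-like vertex and is strictly $f$-degenerate under these constraints — this mirrors the GDP-tree obstruction of Theorem \ref{DL} and the fact that a cycle with appropriate $f$ is reducible.

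Once the reducibility lemmas show that a minimal counterexample $G$ has minimum degree at least $4$ and contains no $\kappa$-cap-subgraph with $\kappa \ge 5$, I would invoke the structural Theorem \ref{MLTWO}: a connected toroidal graph without $4$-cycles either has a vertex of degree at most $3$ or contains such a cap-subgraph. (One has to pass to a connected component of $G$ first, or note that the minimality forces $G$ connected.) This directly contradicts the structure of the minimal counterexample, completing the proof. So the logical skeleton is short: structural theorem $+$ two reducible configurations (low-degree vertex, big cap) $=$ contradiction.

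The main obstacle is the reducibility of the $\kappa$-cap-subgraph, $\kappa \ge 5$. Deleting a single $L_v$ and extending greedily is not automatic here, because the cap contains a cycle and cycles are exactly the troublesome blocks in this setting. The right move is to delete the covers of \emph{all} cap vertices at once, obtain a strictly $f$-degenerate transversal $R'$ of the rest of $H$ by minimality, and then show $H_{V(F_\kappa)}$ — with the $f$-values reduced by the contributions already used up by the colored exterior neighbours — still admits a strictly $f$-degenerate transversal. Since each cap vertex has exactly one or two neighbours outside the cap (degree $4$ in $G$), the residual $f$-sums are at least $2$ on the cycle vertices and at least $2$ on the apex, and one needs a careful case analysis (as in the DP-degree-colorability characterization) to show the residual valued cover on the cap is not ``minimal non-strictly $f$-degenerate,'' i.e.\ is not a GDP-tree-like obstruction — the chordless cycle plus an apex adjacent to two consecutive cycle vertices is not a GDP-tree, so Theorem \ref{DL}-type reasoning (suitably adapted to the transversal setting, using results of \cite{MR3948125}) should apply. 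Handling the interaction of the two $f=2$ coordinates with the matchings $\mathscr{M}_{uv}$ along the cycle is where the real work lies.
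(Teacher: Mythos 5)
Your skeleton is exactly the paper's: take a vertex-minimal counterexample, observe that $(H,f)$ is a minimal non-strictly $f$-degenerate pair, rule out vertices of degree at most three, and then use \autoref{MLTWO} to force a $\kappa$-cap-subgraph $F_{\kappa}$ with $\kappa\geq 5$, which must be shown reducible. The step you correctly identify as ``where the real work lies'' --- reducibility of the cap --- is, however, left as a sketch with an acknowledged unfinished case analysis, and that is the one substantive gap. The paper closes it in one line by applying \autoref{L}\ref{M2} to $F=F_{\kappa}$ \emph{inside} the minimal pair, with no deletion of the cap and no residual-cover bookkeeping: every vertex of $F_{\kappa}$ has degree four in $G$ and $f$-sum at least four, so $V(F_{\kappa})\subseteq\mathcal{D}$; the induced graph $G[V(F_{\kappa})]$ is $2$-connected, is not a cycle (it has vertices of degree three), and is not complete (it has $\kappa+1\geq 6$ vertices of maximum degree at most four); and since $f$ maps into $\{0,1,2\}$ we have $\max_{q}f(v,q)\leq 2<3\leq\deg_{G[V(F_{\kappa})]}(v)$ for a cycle vertex $v$ adjacent to the apex. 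This contradicts \autoref{L}\ref{M2} outright. In particular, the interaction of the $f=2$ coordinates with the matchings $\mathscr{M}_{uv}$ that you worry about never has to be analyzed.

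If you insist on your delete-and-extend route, it does go through, but the clean way to finish is via \autoref{MR} rather than an ad hoc case analysis: after restricting a transversal of $H-L_{v}$ to $V(G)\setminus V(F_{\kappa})$ and reducing $f$ on the cap by the used-up neighbours, the residual $f$-sum at each cap vertex is at least its degree in the cap; the cap is $2$-connected, so a constructible cover of it would have to be a building cover, and the only candidate (a monoblock) is impossible because it would require some single coordinate to carry $f$-value equal to $3$ at a degree-three cap vertex, whereas $f\leq 2$. Also note two small points: minimality as defined only removes one $L_{v}$ at a time, so you should restrict the resulting transversal to the exterior rather than ``delete all cap covers at once by minimality''; and your ``more refined versions for degree-$3$ vertices with small $f$-sum'' are unnecessary, since \autoref{L}\ref{M1} together with the hypothesis $\sum_{q}f(v,q)\geq 4$ already forces minimum degree at least four.
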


In \cref{sec:NA}, we show that every planar graph without subgraphs isomorphic to the configurations in \cref{E} has minimum degree at most $3$ unless it contains a $5$-cap-subgraph. 

\begin{restatable}{theorem}{MLTHREE}\label{MLTHREE}
If $G$ is a planar graph without subgraphs isomorphic to the configurations in \cref{E}, then it has minimum degree at most $3$ or it contains a $5$-cap-subgraph.  
\end{restatable}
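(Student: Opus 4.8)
The plan is to prove the contrapositive by a discharging argument. Suppose $G$ is a connected planar graph with minimum degree $\delta(G) \ge 4$ that contains no subgraph isomorphic to a configuration in \autoref{E}, and assume for contradiction that $G$ contains no $5$-cap-subgraph. Fix a plane embedding of $G$; we may assume $G$ is $2$-connected, so that every face is bounded by a cycle (this reduction is standard). Assign to each vertex $v$ the charge $\mathrm{ch}(v) = \deg_{G}(v) - 4$ and to each face $\phi$ the charge $\mathrm{ch}(\phi) = \deg_{G}(\phi) - 4$; Euler's formula yields total charge $\sum_{v \in V(G)}(\deg_{G}(v) - 4) + \sum_{\phi}(\deg_{G}(\phi) - 4) = -8$. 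Since $\delta(G) \ge 4$, every vertex has nonnegative initial charge, so every unit of negative charge lies on a $3$-face, each carrying exactly $-1$.

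I would first record the local consequences of the hypotheses, which govern where charge can come from. The first configuration of \autoref{E} is a triangle and a $4$-cycle sharing an edge; forbidding it means no $3$-face is adjacent to a $4$-face, so every face adjacent to a $3$-face is either a $3$-face or a face of degree at least $5$. The absence of a $5$-cap-subgraph means precisely that whenever $G$ contains a chordless $5$-cycle $v_{1}v_{2}v_{3}v_{4}v_{5}$ together with a vertex $u$ adjacent to exactly the two consecutive vertices $v_{1}, v_{2}$, at least one of $u, v_{1}, \dots, v_{5}$ has degree at least $5$. The second and third configurations of \autoref{E} further restrict how a degree-$4$ vertex lying in two triangles can meet additional triangles, ruling out two specific ``double triangle plus ear'' patterns. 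These are exactly the facts that will let a $3$-face either find surplus charge nearby or else force a copy of $F_{5}$.

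Next I would fix the discharging rules. The only donors are vertices of degree at least $5$ and faces of degree at least $5$, each with surplus $\deg - 4 \ge 1$; $4$-faces neither give nor receive. The rules would be, roughly: every face of degree at least $5$ sends $\frac{1}{3}$ across each incident edge that it shares with a $3$-face; every vertex of degree at least $5$ sends a calibrated amount to each incident $3$-face; and, to treat clusters of triangles as a unit, a $3$-face forwards part of what it receives to an adjacent $3$-face. Once the constants are chosen, the verification splits three ways. (i) Every vertex ends nonnegative: immediate for degree $\le 4$, and for degree $\ge 5$ a counting check using that, by the first configuration of \autoref{E}, the triangles around a vertex cannot be interleaved with $4$-faces in a way that overloads it. (ii) Every face of degree $\ge 4$ ends nonnegative: clear for $4$-faces, and a face of degree $d \ge 5$ loses at most $\frac{d}{3} \le d - 4$ when $d \ge 6$, with the case $d = 5$ handled separately. (iii) Every $3$-face ends nonnegative: this is the crux, since a $3$-face whose three vertices all have degree $4$ gets nothing from its vertices and must recover its full unit from adjacent faces.

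The main obstacle is step (iii). For a $3$-face $xyz$ with $\deg(x) = \deg(y) = \deg(z) = 4$, one analyses its three neighbouring faces (each a $3$-face or a face of degree at least $5$) and the triangles these force around $x$, $y$, $z$; in each case one must either locate the required $\frac{1}{3}$'s among the adjacent large faces, or exhibit a chordless $5$-cycle carrying a degree-$4$ ear vertex, i.e.\ an $F_{5}$, contradicting the choice of $G$. This is where the second and third configurations of \autoref{E} are invoked and where the discharging constants (together with the vertex rule in (i)) must be calibrated so that a $3$-face squeezed between other small faces is still paid in full. Having verified (i)--(iii), the sum of the final charges is nonnegative, contradicting that it equals $-8$; hence $G$ must contain a $5$-cap-subgraph, which proves \autoref{MLTHREE}.
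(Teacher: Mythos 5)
Your overall framework---charges $\deg(\cdot)-4$ on vertices and faces, donors being the $5^{+}$-vertices and $5^{+}$-faces, and the observation that no $3$-face is adjacent to a $4$-face---matches the paper's proof. But there is a genuine gap: the discharging rules are never pinned down (``roughly'', ``calibrated''), and, more importantly, you locate the difficulty in the wrong place. A $3$-face all of whose vertices have degree $4$ is in fact the easy case: since configuration (a) of \autoref{E} forbids chorded $5$-cycles, such a face is adjacent to at most one other $3$-face and hence to at least two $5^{+}$-faces, from which it can always collect $2\times\tfrac12$ or $3\times\tfrac13$ without any appeal to the absence of a $5$-cap-subgraph. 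The real bottleneck is the reverse direction: a $5$-face has surplus only $1$ yet may be adjacent to four or five $3$-faces, so your rule ``every $5^{+}$-face sends $\tfrac13$ across each edge it shares with a $3$-face'' drives such a face to $1-\tfrac53=-\tfrac23$, and your rule set offers it no way to recover, because your vertices pay only incident $3$-faces and your $3$-faces forward only to other $3$-faces.

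This overloaded $5$-face is exactly where the absence of $F_{5}$ must be used: if all five of its vertices have degree $4$, then the apex of each adjacent triangle must be a $5^{+}$-vertex (otherwise the chordless boundary $5$-cycle plus that apex is a $5$-cap-subgraph), and one needs a rule transferring charge from those apexes \emph{through} the triangles into the $5$-face; if instead the $5$-face carries a $5^{+}$-vertex, one needs that vertex to pay the face directly, with the amount depending on how many triangles surround it. The paper's rules \ref{S}--\ref{5c} (the $5_{\mathrm a}$-, $5_{\mathrm b}$-, $5_{\mathrm c}$-faces and the related source/sink mechanism) exist precisely for this, and the ensuing case analysis of $5$-vertices---the only place configurations (b) and (c) of \autoref{E} are actually invoked---is required to check that a $5$-vertex can afford all of these payments simultaneously. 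None of this machinery is present in your outline, so the contradiction with the total charge $-8$ is not established. (A minor point: your reduction to the $2$-connected case is neither justified nor needed; blocks of a graph of minimum degree $4$ need not have minimum degree $4$, and face degrees can be read off boundary walks of any connected plane graph.)
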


Similarly, we can easily deduce the subsequent result from \cref{MLTHREE}. 
\begin{restatable}{theorem}{MRTHREE}\label{MRTHREE}
Let $G$ be a planar graph without subgraphs isomorphic to the configurations in \cref{E}. Let $H$ be a cover of $G$ and $f$ be a function mapping $V(H)$ to $\{0, 1, 2\}$. If $f(v, 1) + f(v, 2) + \dots + f(v, s) \geq 4$ for each $v \in V(G)$, then $H$ has a strictly $f$-degenerate transversal. 
\end{restatable}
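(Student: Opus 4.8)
The plan is to argue by contradiction, taking $(G,H,f)$ to be a counterexample minimizing $|V(G)|$ (with ties broken, say, by maximizing $|E(G)|$), and then combining the structural Theorem~\ref{MLTHREE} with a local reducibility analysis. Since $G$ is planar and avoids both configurations in \autoref{E}, Theorem~\ref{MLTHREE} tells us that either $\delta(G)\le 3$ or $G$ contains a $5$-cap-subgraph $F_5$; I would treat these two outcomes as the two cases of the argument. Throughout I will use the standard observation (and the machinery behind Theorem~\ref{DL}, i.e.\ the GDP-tree characterization, together with the ``minimal non-strictly-$f$-degenerate'' formalism introduced just before \autoref{MLONE}) that when we delete a low-degree vertex $v$, apply minimality to $G-v$ to get a strictly $f$-degenerate transversal $R'$ of $H_{V(G)-v}$, and then try to extend to $v$: extension fails only if every element of $L_v$ is ``blocked,'' and the number of blocking neighbours in $R'$ is controlled by $\deg_G(v)$ and the values $f(v,i)$.

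For the first case, suppose $v$ is a vertex with $\deg_G(v)=d\le 3$. Let $R'$ be a strictly $f$-degenerate transversal of $H-L_v$ (exists by minimality). For each $i$, the copy $(v,i)$ can be added precisely when $(v,i)$ has fewer than $f(v,i)$ neighbours in $R'$; the total number of neighbours of $L_v$ in $R'$ is at most $d\le 3$, while $\sum_i f(v,i)\ge 4$. A short counting argument (distributing at most $3$ ``obstructions'' among the colours with $\sum f(v,i)\ge 4$, each colour $i$ needing $\ge f(v,i)$ of them to be blocked) shows some $(v,i)$ is choosable, so $R'\cup\{(v,i)\}$ is the desired transversal — contradiction. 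This also disposes of any vertex of degree $\le 2$ trivially and lets me assume $\delta(G)\ge 4$ henceforth, so the second case is the real content.

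In the second case $G$ contains a $5$-cap-subgraph $F_5$: a chordless $5$-cycle $C=u_1u_2u_3u_4u_5$ together with a vertex $w$ adjacent to exactly two consecutive vertices, say $u_1,u_2$, and all six vertices have degree exactly $4$ in $G$. The plan is to show this configuration is reducible. I would delete the vertex set $S=\{u_1,u_2,u_3,u_4,u_5,w\}$ (or a carefully chosen subset), invoke minimality of $(G,H,f)$ on $G-S$ to obtain a strictly $f$-degenerate transversal $R'$ of $H_{V(G)-S}$, and then extend $R'$ across $H_S$ by choosing one copy in each $L_{u_i}$ and in $L_w$ so that the induced subgraph on the chosen six copies is strictly $f$-degenerate. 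Here each vertex of $S$ has exactly $4-\deg_{G[S]}(\cdot)$ neighbours outside $S$, so after the deletion each vertex retains a healthy ``residual budget'': the restriction $f'$ obtained by subtracting, for each $x\in S$ and each colour $i$, the number of already-chosen blocking neighbours, still satisfies a pointwise bound strong enough that $H_S$ with the reduced matching admits a strictly $f'$-degenerate transversal. The key point is that $G[S]$ — the $5$-cap itself — is \emph{not} a GDP-tree (its blocks are a $5$-cycle and a triangle sharing the edge $u_1u_2$, hence it is $2$-connected but is neither a single cycle nor a single complete graph — actually one should verify it directly: the $5$-cap is $2$-connected with a chord-free $C_5$, so by Theorem~\ref{DL} the corresponding cover is DP-degree-colorable), so an Euler-characteristic/degree count gives enough slack to absorb the external obstructions.

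The main obstacle I anticipate is the extension step in the second case: verifying that for \emph{every} valued cover $(H_S,f')$ arising this way there is a strictly $f'$-degenerate transversal. This requires a careful case analysis of how many of the six vertices of $F_5$ have external neighbours and how the corresponding obstructions can interact (a worst case being when several obstructed colours coincide under the matchings $\mathscr{M}_{u_iu_{i+1}}$). I would handle it by setting up the ``monoblock''/minimal-counterexample reduction lemma (analogous to the one that must underlie \autoref{MRONE} and \autoref{MRTWO}) that says: if $G[S]$ together with its external-degree data satisfies a discharging-type inequality and $G[S]$ contains no GDP-tree block structure forcing failure, then $(H_S,f)$ has a strictly $f$-degenerate transversal for any cover. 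With $F_5$ being $2$-connected and $4$-regular inside, and with exactly the right number of ``pendant'' budget units pointing outward, this inequality holds with room to spare, which closes the case and hence the proof.
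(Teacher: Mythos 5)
Your overall architecture (minimal counterexample, invoke \autoref{MLTHREE}, dispose of a vertex of degree at most three by a counting extension, then reduce the $5$-cap) is the same as the paper's, and your first case is correct: it is exactly the content of \autoref{L}\ref{M1}. The genuine gap is in the second case, which you yourself call ``the real content'' but do not carry out. After deleting $S=V(F_5)$ and passing to the residual function $f'$, you have $\sum_q f'(x,q)\ge 4-(\text{external degree of }x)=\deg_{G[S]}(x)$ for every $x\in S$, so by \autoref{MR} the extension exists if and only if $(H_S,f')$ is non-constructible; everything hinges on proving non-constructibility. Your justification for it is wrong in its details and, more importantly, never uses the hypothesis $f\colon V(H)\to\{0,1,2\}$, without which the step is false. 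Concretely: the $5$-cap is $2$-connected, hence a single block, not ``a $5$-cycle and a triangle sharing $u_1u_2$''; $G[S]$ is not ``$4$-regular inside'' (the two vertices of the shared edge have internal degree $3$, the other four have internal degree $2$); and \autoref{DL} concerns $\{0,1\}$-valued $f$, so it says nothing about $f'$, which takes values in $\{0,1,2\}$. For such $f'$ the GDP-tree obstruction is not the only one: a $2$-connected graph that is neither a cycle nor complete can still carry a \emph{monoblock} cover, which admits no strictly $f'$-degenerate transversal, so your proposed reduction lemma (``$2$-connected and not a GDP-tree implies extendable'') is false as stated.

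What actually kills the monoblock --- and with it every building cover of \autoref{constructible} --- is the bound $\max_q f'(x,q)\le\max_q f(x,q)\le 2$ combined with the fact that $u_1$ has internal degree $3$ in the $5$-cap: a building cover would force $\sum_q f'(u_1,q)=3$ concentrated as required by the kernel condition, which is impossible when every value is at most $2$, while $G[S]$ being neither a cycle nor complete rules out the remaining building types and $2$-connectivity rules out the identification step. This is precisely the inequality $\deg_{G[V(F_5)]}(v)\ge 3>\max_q f(v,q)$ that the paper feeds into \autoref{L}\ref{M2} to finish in one line. Until you isolate this point your case~2 does not close, and your argument as written would go through verbatim for $f$ taking values up to $3$ or $4$, where the extension across the $5$-cap can genuinely fail.
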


Recall that the notion of strictly $f$-degenerate transversal serves as a generalization of DP-coloring and $L$-forested-coloring.

\begin{proposition}[Lu \etal \cite{MR4357325}]\label{PROP}
Let $G$ be a graph and $H$ be a cover of $G$. If $H$ has a strictly $f$-degenerate transversal whenever $f$ is a function mapping $V(H)$ to $\{0, 1, 2\}$, and $f(v, 1) + f(v, 2) + \dots + f(v, s) \geq k$ for each vertex $v \in V(G)$, then $G$ is DP-$k$-colorable, and the list vertex arboricity of $G$ is at most $\lceil\frac{k}{2}\rceil$. 
\end{proposition}

Using \cref{MRONE,MRTWO,MRTHREE} and \cref{PROP}, we obtain the following corollary. 

\begin{corollary}
\mbox{}
\begin{enumerate}[label = (\roman*)]
\item Every toroidal graph without subgraphs isomorphic to the configurations in \cref{NOA}, is DP-$4$-colorable. 
\item The list vertex arboricity of any toroidal graph without subgraphs isomorphic to the configurations in \cref{NOA} is at most $2$. 
\item Every toroidal graph without $4$-cycles is DP-$4$-colorable. 
\item The list vertex arboricity of any toroidal graph without $4$-cycles is at most $2$. 
\item Every planar graph without subgraphs isomorphic to the configurations in \cref{E}, is DP-$4$-colorable. 
\item The list vertex arboricity of any planar graph without subgraphs isomorphic to the configurations in \cref{E} is at most $2$. 
\end{enumerate}
\end{corollary}

The results in the above corollary enhance many known results concerning DP-$4$-coloring and (list) vertex arboricity in \cite{MR3570576,MR3320048,MR3233411,MR2408378,MR4089638}. 

\section{Preliminaries}\label{sec:minimal}
We define three classes of graphs as follows.

\begin{itemize}
\item The graph $\widetilde{K_{p}}$ is the Cartesian product of the complete graph $K_{p}$ and an independent $s$-set. 

\item The \textbf{circular ladder graph $\Gamma_{n}$} is the Cartesian product of the cycle $C_{n}$ and an independent set with two vertices. 

\item The \textbf{M\"{o}bius ladder} $M_{n}$ is the graph with the vertex set $\big\{\,(i, j) \mid i \in [n], j \in [2]\,\big\}$, where two vertices $(i, j)$ and $(i', j')$ are adjacent if and only if either
\begin{enumerate}[label = ---]
\item $i' = i + 1$ and $j = j'$ for $1 \leq i \leq n-1$, or 
\item $i = n$, $i'=1$ and $j \neq j'$. 
\end{enumerate}
\end{itemize}

We introduce the following definitions:

\begin{definition}
Given a valued cover $(H, f)$ of a graph $G$, a \textbf{kernel} of $H$ is the subgraph obtained from $H$ by removing each vertex $(u, j)$ with $f(u, j) = 0$. 
\end{definition}

\begin{definition}
A \textbf{building cover} for a $2$-connected graph $B$ is a valued cover $(H, f)$ of $B$ such that 
\[
f(v, 1) + f(v, 2) + \dots + f(v, s) = \deg_{B}(v)
\] for each $v \in V(B)$, and one of the following conditions is satisfied:
\begin{enumerate}[label = (\roman*)]
\item The kernel of $H$ is isomorphic to $B$. Such a cover is referred to as a \textbf{monoblock}. 
\item If $B$ is isomorphic to a complete graph $K_{p}$ for some $p \geq 2$, then the kernel of $H$ is isomorphic to $\widetilde{K_{p}}$, where $f$ is constant on each component of $\widetilde{K_{p}}$. 
\item If $B$ is isomorphic to an odd cycle, then the kernel of $H$ is isomorphic to the circular ladder graph with $f \equiv 1$. 
\item If $B$ is isomorphic to an even cycle, then the kernel of $H$ is isomorphic to the M\"{o}bius ladder with $f \equiv 1$. \qed
\end{enumerate}
\end{definition}

\begin{definition}\label{constructible}
Every building cover is \textbf{constructible}. A valued cover $(H, f)$ of a graph $G$ is also \textbf{constructible} if it is derived from a constructible valued cover $(H^{(1)}, f^{(1)})$ of $G^{(1)}$ and a constructible valued cover $(H^{(2)}, f^{(2)})$ of $G^{(2)}$ such that all of the following conditions hold: 
\begin{enumerate}[label = (\roman*)]
\item the graph $G$ is obtained from $G^{(1)}$ and $G^{(2)}$ by merging $w_{1}$ in $G^{(1)}$ and $w_{2}$ in $G^{(2)}$ into a new vertex $w$, and
\item the cover $H$ is obtained from $H^{(1)}$ and $H^{(2)}$ by merging $(w_{1}, q)$ and $(w_{2}, q)$ into a new vertex $(w, q)$ for each $q \in [s]$, and  
\item $f(w, q) = f^{(1)}(w_{1}, q) + f^{(2)}(w_{2}, q)$ for each $q \in [s]$, $f = f^{(1)}$ on $H^{(1)} - L_{w}$, and $f = f^{(2)}$ on $H^{(2)} - L_{w}$. We simply write this as $\bm{f = f^{(1)} + f^{(2)}}$. \qed
\end{enumerate}
\end{definition}

\begin{theorem}[Lu \etal \cite{MR4357325}]\label{MR}
Let $G$ be a connected graph and $(H, f)$ be a valued cover with $f(v, 1) + f(v, 2) + \dots + f(v, s) \geq \deg_{G}(v)$ for every vertex $v \in V(G)$. Then $H$ has a strictly $f$-degenerate transversal if and only if $H$ is non-constructible.
\end{theorem}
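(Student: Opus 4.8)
\emph{Setup.} Throughout I use the standard reformulation: a graph $\Gamma$ is strictly $f$-degenerate if and only if its vertices can be listed as $x_1,\dots,x_m$ so that each $x_i$ has fewer than $f(x_i)$ neighbours among $x_1,\dots,x_{i-1}$ (reverse a removal order); also, any strictly $f$-degenerate transversal avoids every vertex of $f$-value $0$ and hence lies in the kernel. The two implications are proved separately. For \emph{constructible $\Rightarrow$ no strictly $f$-degenerate transversal}, I would induct on the number of building covers used in the construction. For the base case one checks the four types of building cover of a graph $B$. For a monoblock the kernel is isomorphic to $B$, so it meets each $L_v$ in exactly one vertex $(v,q_v)$; that single kernel transversal is the kernel itself, where $\deg(v,q_v)=\deg_B(v)=f(v,q_v)$, so it is a nonempty subgraph with no vertex of degree below its $f$-value. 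If the kernel is $\widetilde{K_p}$ with $f$ equal to the constant $c_i$ on its $i$-th component, any kernel transversal induces a disjoint union of cliques of total order $p=|V(K_p)|$, the clique meeting component $i$ being strictly $f$-degenerate only if it has order at most $c_i$, impossible since $\sum_i c_i=\deg_{K_p}(v)=p-1$. The circular-ladder and M\"{o}bius-ladder cases (with $f\equiv1$) reduce to the non-existence of the relevant $2$-colouring of an odd, respectively parity-twisted, cycle. For the inductive step, let $(H,f)$ be obtained by identifying $w_1,w_2$ into $w$ from $(H^{(1)},f^{(1)})$ and $(H^{(2)},f^{(2)})$, both transversal-free by induction, and suppose $R$ were a strictly $f$-degenerate transversal choosing $(w,q^{*})$. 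Fix a listing of $H[R]$ as above. Any $x\in R\cap V(H^{(i)})$ with $x\neq(w,q^{*})$ has all its neighbours inside $H^{(i)}$, so it still has fewer than $f^{(i)}(x)=f(x)$ back-neighbours; at $(w,q^{*})$ the back-neighbours split into $a$ on the $H^{(1)}$ side and $b$ on the $H^{(2)}$ side with $a+b<f^{(1)}(w_1,q^{*})+f^{(2)}(w_2,q^{*})$, so $a<f^{(1)}(w_1,q^{*})$ or $b<f^{(2)}(w_2,q^{*})$. In the first case $R\cap V(H^{(1)})$ is a strictly $f^{(1)}$-degenerate transversal of $H^{(1)}$ (in the kernel since $f^{(1)}(w_1,q^{*})>a\ge0$), contradicting the induction hypothesis; the second case is symmetric.

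For \emph{no strictly $f$-degenerate transversal $\Rightarrow$ constructible}, I would induct on $|V(G)|$, showing $(H,f)$ is constructible assuming it has no strictly $f$-degenerate transversal. \emph{First, a reduction to equality.} Order $V(G)$ as $u_1,\dots,u_n$ so that every $u_i$ with $i<n$ has a later neighbour (reverse a spanning-tree order rooted at $u_n$), and pick transversal vertices greedily in this order: at step $i$ the already-chosen vertices of the earlier neighbours of $u_i$ are matched into the slots of $L_{u_i}$ with multiplicities $m_q$ whose sum is at most the number of earlier neighbours of $u_i$; if no slot has $m_q<f(u_i,q)$, then $\sum_q f(u_i,q)\le\sum_q m_q\le(\text{number of earlier neighbours of }u_i)$. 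For $i<n$ this is at most $\deg_G(u_i)-1<\deg_G(u_i)\le\sum_q f(u_i,q)$, a contradiction, so the greedy never stalls before $u_n$, and it also completes at $u_n$ whenever $\sum_q f(u_n,q)>\deg_G(u_n)$. Thus running this with $u_n$ a vertex of strict inequality would produce a strictly $f$-degenerate transversal, so $\sum_q f(v,q)=\deg_G(v)$ for all $v$. \emph{Next, a reduction to $2$-connectedness.} If $G$ has a cut-vertex $w$, write $G=G^{(1)}\cup G^{(2)}$ glued along $w$; I would argue there is a partition $f(w,\cdot)=f^{(1)}(w,\cdot)+f^{(2)}(w,\cdot)$ with $\sum_q f^{(i)}(w,q)=\deg_{G^{(i)}}(w)$ for which neither $(H^{(i)},f^{(i)})$ has a strictly $f^{(i)}$-degenerate transversal --- otherwise, for a suitable split, two such transversals of the pieces agreeing at $w$ would glue (by an ordering argument dual to the one above) into one of $H$. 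Given such a split, induction makes both $(H^{(i)},f^{(i)})$ constructible, hence $(H,f)$ is constructible. So we may assume $G$ is $2$-connected.

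It then remains to treat $G$ $2$-connected with $\sum_q f(v,q)=\deg_G(v)$ everywhere and no strictly $f$-degenerate transversal, the goal being to show $(H,f)$ is a building cover. Here the plan is a Brooks-type argument inside the cover: if $G$ is neither a complete graph nor a cycle, choose (as in Brooks' theorem) a vertex $z$ of degree at least $3$ with two non-adjacent neighbours $x,y$ such that $G-\{x,y\}$ is connected, list $V(G)$ with $x,y$ first, $z$ last, and every other vertex having a later neighbour, and build the transversal greedily; every vertex but $z$ is handled by the multiplicity count above, and the point of the Brooks choice is to select $(x,q_x)$ and $(y,q_y)$ so that they forbid the \emph{same} slot of $L_z$, which reduces the forbidden total at $z$ below $\sum_q f(z,q)$ and leaves $z$ a legal slot. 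Hence $G$ is $K_p$ or a cycle $C_n$, and one finishes by a direct structural analysis: for $K_p$, either the kernel meets each $L_v$ in a single vertex and these induce $K_p$ (a monoblock), or --- reading the matchings as a partition of $\bigcup_v L_v$ into transversals --- the kernel is $\widetilde{K_p}$ with $f$ constant on components; for $C_n$ the kernel is a union of ``doubled'' paths and cycles and transversal-freeness forces a monoblock or, with $f\equiv1$, the circular ladder ($n$ odd) or the M\"{o}bius ladder ($n$ even).

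The genuine difficulty is this last core case. Adapting Brooks' ``two non-adjacent neighbours'' device to the transversal setting is delicate because the matchings $\mathscr{M}_{xz},\mathscr{M}_{yz}$ are arbitrary: one must verify that some slot of $L_z$ is simultaneously the $\mathscr{M}_{xz}$-image of a positive-$f$ slot of $L_x$ and the $\mathscr{M}_{yz}$-image of a positive-$f$ slot of $L_y$, and it is precisely the various ways this can fail --- propagated over all choices of $z,x,y$ --- that funnel $(H,f)$ into the exceptional building covers $\widetilde{K_p}$, the circular ladder and the M\"{o}bius ladder. Pinning this down, together with the case analyses for $K_p$ and $C_n$, is where the real work lies; by contrast the greedy reduction to equality and the gluing lemma underpinning the cut-vertex step are comparatively routine.
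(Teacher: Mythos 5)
This theorem is quoted from \cite{Lu2019a}; the paper contains no proof of it, so there is no in-paper argument to compare against and your proposal has to stand on its own. Your forward direction (constructible $\Rightarrow$ no strictly $f$-degenerate transversal), the greedy reduction to $\sum_q f(v,q)=\deg_G(v)$, and the cut-vertex gluing are all sound in outline. The problem is the $2$-connected core case, and it is not merely that you defer the ``real work'' there: the route you propose for it cannot succeed.

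Your Brooks device --- pick non-adjacent neighbours $x,y$ of $z$ whose selected vertices forbid the same slot of $L_z$, conclude that the forbidden total at $z$ drops below $\sum_q f(z,q)$, ``hence $G$ is $K_p$ or a cycle'' --- only works when $f\le 1$ on $L_z$. Writing $m_q$ for the number of earlier neighbours whose chosen vertex is matched to $(z,q)$, the greedy stalls at $z$ iff $m_q\ge f(z,q)$ for every $q$; summing gives $m_q=f(z,q)$ exactly, which is perfectly consistent with $x$ and $y$ both landing in a slot $q_0$ with $f(z,q_0)\ge 2$ --- no saving occurs. Worse, the intermediate conclusion is false: a monoblock over \emph{any} $2$-connected graph $B$ (say $K_{2,3}$, with $f$ concentrated on a single slot per vertex, $f(v,q_v)=\deg_B(v)$, and matchings respecting those slots) satisfies every hypothesis of the theorem, is neither a cycle nor a complete graph, and has no strictly $f$-degenerate transversal --- as your own forward direction proves. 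Running your argument on this example, $x$ and $y$ are forced into their unique positive slots, both of which map to the unique positive slot of $L_z$, and $z$ is stuck. So the exceptional covers cannot be confined to $K_p$ and $C_n$; the correct trichotomy (visible in \autoref{L}\ref{M2} of this paper) is that the $2$-connected piece is a cycle, or a complete graph, or satisfies $\deg(v)\le\max_q f(v,q)$ at every vertex, and it is this last branch --- where the kernel collapses to a monoblock of an arbitrary $2$-connected $B$ --- that your plan omits entirely. Repairing it requires first finding a vertex whose $f$-mass is usefully spread over at least two slots (or showing none exists and landing in the monoblock case), which is where the actual content of the Lu--Wang--Wang theorem lies.
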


Let $\mathscr{D}$ denote the set of all vertices $v \in V(G)$ for which the condition $f(v, 1) + f(v, 2) + \dots + f(v, s) \geq \deg_{G}(v)$ holds.

\begin{theorem}[Lu \etal \cite{MR4357325}]\label{L}
Let $G$ be a graph and $(H, f)$ be a valued cover of $G$. Suppose $B$ is a nonempty subset of $\mathscr{D}$, and $G[B]$ has no cut vertices. If $(H, f)$ is a minimal non-strictly $f$-degenerate pair, then
\begin{enumerate}[label = (\roman*)]
\item\label{M1} $G$ is connected and $f(v, 1) + f(v, 2) + \dots + f(v, s) \leq \deg_{G}(v)$ for each vertex $v \in V(G)$, and 
\item\label{M2} Either $G[B]$ is a cycle, or a complete graph, or $\deg_{G[B]}(v) \leq \max_{q} \{f(v, q)\}$ for each vertex $v \in B$. \qed
\end{enumerate}
\end{theorem}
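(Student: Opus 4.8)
The plan is to prove the two parts separately, deriving the degree bounds in part~\ref{M1} from a short extension argument and then reducing part~\ref{M2} to the classification of constructible covers via \autoref{MR}.

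For part~\ref{M1} I would first show $G$ is connected. Since a graph is strictly $f$-degenerate exactly when each of its components is, $H$ admits a strictly $f$-degenerate transversal if and only if the sub-cover over each component of $G$ does; hence $H$ having none forces some component $H_{i}$ of $H$ to have none. If $G$ were disconnected, deleting $L_{v}$ for a vertex $v$ outside that component would leave $H_{i}$ intact, so $(H-L_{v},f)$ would still have no strictly $f$-degenerate transversal, contradicting minimality. For the inequality, suppose $\sum_{q} f(u,q) > \deg_{G}(u)$ for some $u$. By minimality $(H-L_{u},f)$ has a strictly $f$-degenerate transversal $R'$; since $\mathscr{M}_{uv}$ is a matching, each neighbour $v$ of $u$ contributes an edge to at most one vertex of $L_{u}$, so $\sum_{q}\deg_{H[R'\cup\{(u,q)\}]}(u,q)\le \deg_{G}(u) < \sum_{q} f(u,q)$. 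Thus some $q$ satisfies $\deg_{H[R'\cup\{(u,q)\}]}(u,q) < f(u,q)$, and eliminating $(u,q)$ first shows $R'\cup\{(u,q)\}$ is a strictly $f$-degenerate transversal of $H$, a contradiction.

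For part~\ref{M2}, write $G'=G[V(F)]$ and $H'=H_{V(F)}$. Combining part~\ref{M1} with $V(F)\subseteq\mathcal{D}$ gives $\sum_{q} f(v,q)=\deg_{G}(v)\ge \deg_{G'}(v)$ for every $v\in V(F)$, and since $F\subseteq G'$ on the same vertex set, $G'$ is $2$-connected. I would argue by contradiction: assume \ref{M2} fails, so $G'$ is neither a complete graph nor a cycle and some $v_{0}\in V(F)$ has $\deg_{G'}(v_{0}) > \max_{q} f(v_{0},q)$. Using minimality, fix a strictly $f$-degenerate transversal of $(H-L_{v_{0}},f)$ and let $\rho$ be its restriction to $V(G)\setminus V(F)$, so $H[\rho]$ is strictly $f$-degenerate. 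For $v\in V(F)$ and $q\in[\kappa]$, let $b(v,q)$ be the number of neighbours of $v$ outside $V(F)$ whose $\rho$-colour is matched to $(v,q)$, and set $f''(v,q)=\max\{f(v,q)-b(v,q),\,0\}$. The key reduction is that, colouring $V(F)$ on top of $\rho$ with the outside vertices placed last in the elimination order, a strictly $f''$-degenerate transversal of $H'$ glues with $\rho$ to a strictly $f$-degenerate transversal of the whole $H$; since $H$ has none, $(H',f'')$ has no strictly $f''$-degenerate transversal. Because $\sum_{q} b(v,q)$ is at most the number of neighbours of $v$ outside $V(F)$, we get $\sum_{q} f''(v,q)\ge \deg_{G'}(v)$, so $(H',f'')$ satisfies the hypotheses of \autoref{MR} over the connected graph $G'$ and is therefore constructible. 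As $G'$ is $2$-connected it has no cut vertex, so no nontrivial identification of \autoref{constructible} is possible and $(H',f'')$ must be a building cover of $G'$ itself. Since $G'$ is neither complete nor a cycle, the only surviving case is the monoblock: each $L_{v}$ retains exactly one vertex of positive $f''$-value, with $f''(v,q_{v})=\deg_{G'}(v)$. Then $\max_{q} f(v_{0},q)\ge f(v_{0},q_{v_{0}})\ge f''(v_{0},q_{v_{0}})=\deg_{G'}(v_{0})$, contradicting $\deg_{G'}(v_{0}) > \max_{q} f(v_{0},q)$ and completing part~\ref{M2}.

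I expect the main obstacle to be the reduction defining $f''$. One must verify both that a strictly $f''$-degenerate transversal of $H'$ genuinely extends $\rho$ to a strictly $f$-degenerate transversal of $H$ (placing the outside vertices last, so their later-neighbours lie only outside $V(F)$, while each $v\in V(F)$ absorbs its outside neighbours into the deficit $f-f''$), and that $\sum_{q} f''(v,q)\ge\deg_{G'}(v)$ survives truncation at $0$. The remaining delicate point is the passage from ``constructible over a $2$-connected graph'' to ``building cover,'' which rests on the observation that every nontrivial identification in \autoref{constructible} creates a cut vertex; this is what isolates the monoblock case and makes the degree computation at $v_{0}$ decisive.
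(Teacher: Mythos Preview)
The paper does not contain a proof of this theorem: it is quoted from Lu, Wang and Wang \cite{Lu2019a} and stated in the Preliminary section without argument. Consequently there is no ``paper's own proof'' to compare against; your proposal is a standalone proof of a cited result.

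That said, your argument is correct and well organised. Part~\ref{M1} is standard and your handling is clean; the pigeonhole step using that each $\mathscr{M}_{uv}$ is a matching is exactly what is needed. For part~\ref{M2}, the reduction via the residual function $f''$ is the right idea, and both of the points you flag as potential obstacles do go through: the truncation at~$0$ only helps the inequality $\sum_{q} f''(v,q)\ge \deg_{G'}(v)$ since $\max\{x,0\}\ge x$, and the elimination order ``inside first, outside last'' gives each $(v,q)\in R''$ at most $b(v,q)$ later neighbours in $\rho$ plus fewer than $f''(v,q)$ later neighbours in $R''$, hence fewer than $f(v,q)$ in total. The passage from ``constructible over a $2$-connected $G'$'' to ``building cover'' is also sound: any nontrivial identification in \autoref{constructible} produces a cut vertex because neither factor can be a single vertex (a one-vertex graph has no building cover, as the kernel would be empty but must be isomorphic to $K_{1}$). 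Finally, since every constructible cover satisfies $\sum_{q} f(v,q)=\deg(v)$ exactly, the monoblock conclusion forces $f''(v_{0},q_{v_{0}})=\deg_{G'}(v_{0})\le f(v_{0},q_{v_{0}})\le \max_{q} f(v_{0},q)$, and the contradiction follows.
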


\section{Results on toroidal graphs}\label{sec:ML}
We recall our structural result on some toroidal graphs. 
\MLONE*
\begin{proof}
Suppose that $G$ is a connected toroidal graph without the configurations in \cref{NOA}, and the minimum degree is at least $4$. We can further assume that $G$ has been 2-cell embedded in the plane or torus. 

We then assign the initial charge $\mu(v) = \deg(v) - 4$ to each vertex $v \in V(G)$, and $\mu(f) = \deg(f) - 4$ to each face $f \in F(G)$. According to Euler's formula, the sum of these initial charges is $- 4\epsilon(G)$. Then

\begin{equation}\label{Euler1}
\sum_{v\,\in\,V(G)}\big(\deg(v) - 4\big) + \sum_{f\,\in\,F(G)}\big(\deg(f) - 4\big) = - 4\epsilon(G) \leq 0. 
\end{equation}

We use the following discharging rules. 
\begin{enumerate}[label = \textbf{R\arabic*.}, ref = R\arabic*]
\item\label{Rule1} A $3$-face adjacent to three $5^{+}$-faces receives $\frac{1}{3}$ from each adjacent face. 
\item\label{Rule2} Let $w_{1}w_{2}$ be incident with two $3$-faces $f = [w_{1}w_{2}w_{3}]$ and $g = [w_{1}w_{2}w_{4}]$. If $\deg(w_{1}) = \deg(w_{2}) = 4$, then $f$ receives $\frac{1}{2}$ from each adjacent $5^{+}$-face; otherwise, $f$ receives $\frac{1}{3}$ from each adjacent $5^{+}$-face and $\frac{1}{3}$ from each $5^{+}$-vertex in $\{w_{1}, w_{2}\}$. 
\end{enumerate}

Since each $5$-cycle has no chords, each $3$-face can be adjacent to at most one $3$-face and at least two $5^{+}$-faces. When a $3$-face $f$ is adjacent to three $5^{+}$-faces, it receives $\frac{1}{3}$ from each adjacent $5^{+}$-face. This results in $\mu'(f) = 3 - 4 + 3 \times \frac{1}{3} = 0$ by \ref{Rule1}. In the situation where a $3$-face $f$ is adjacent to one $3$-face and two $5^{+}$-faces, then we have $\mu'(f) \geq 3 - 4 + 2 \times \frac{1}{2} = 0$ or $\mu'(f) \geq 3 - 4 + 3 \times \frac{1}{3} = 0$ by \ref{Rule2}. The final charge of each $4$-face remains unchanged since it is not involved in the discharging procedure, which results in a final charge of zero. Since $G$ has no configuration as shown in \cref{fig:subfig:b-}, each $5$-face $f$ is adjacent to at most two $3$-faces, thus $f$ sends at most $\frac{1}{2}$ to each adjacent $3$-face, leading to $\mu'(f) \geq 5 - 4 - 2 \times \frac{1}{2} = 0$. 

Let $w_{1}w_{2}$ be incident with two $3$-faces $[w_{1}w_{2}w_{3}]$ and $[w_{1}w_{2}w_{4}]$, and let $w_{2}w_{3}$ be incident with a $6^{+}$-face $f = [ww_{2}w_{3}\dots w]$. Suppose that $\deg(w_{1}) = \deg(w_{2}) = 4$. Then $ww_{2}$ is incident with $f$ and another $5^{+}$-face $g$. In this case, $f$ sends $\frac{1}{2}$ to the $3$-face $[w_{1}w_{2}w_{3}]$, but no charge to $g$, we can interpret this as $f$ sending $\frac{1}{3}$ to $[w_{1}w_{2}w_{3}]$ via $w_{2}w_{3}$ and an additional $\frac{1}{6}$ to $[w_{1}w_{2}w_{3}]$ via $ww_{2}$. Hence, in any case, $f$ averagely sends at most $\frac{1}{3} = 2 \times \frac{1}{6}$ via per incident edge. As a result, each $6^{+}$-face $f$ has a final charge $\mu'(f) \geq \deg(f) - 4 - \deg(f) \times \frac{1}{3} \geq 0$. In particular, every $7^{+}$-face has a positive final charge. 

Since each $4$-vertex does not participate in the discharging procedure, its final charge remains zero. Since there are no three consecutive $3$-faces, every $k$-vertex is incident with at most $\lfloor\frac{2k}{3}\rfloor$ triangular faces. By \ref{Rule2}, each $5^{+}$-vertex $v$ sends at most $2\times \frac{1}{3} = \frac{2}{3}$ in total to two adjacent $3$-faces, but does not send anything to a singular $3$-face. Therefore, 
\begin{equation*}
\mu'(v) \geq \deg(v) - 4 - \left\lfloor \frac{\deg(v)}{3}\right\rfloor \times \frac{2}{3} > 0. 
\end{equation*}

By \eqref{Euler1}, every element in $V(G) \cup F(G)$ has a final charge of zero, which implies that $G$ is $4$-regular, $\epsilon(G) = 0$, and $G$ has only $6^{-}$-faces. If $w$ is incident with a cut-edge $ww_{1}$, then $ww_{1}$ is incident with an $8^{+}$-face, a contradiction. 

Suppose that $w$ is a cut-vertex but not incident with any cut-edge. Since $G$ is $4$-regular, $G - w$ consists of precisely two components $C_{1}$ and $C_{2}$, and $w$ has precisely two neighbors in each component. Without loss of generality, let's assume that $ww_{1}, ww_{2}, ww_{3}$ and $ww_{4}$ are the four incident edges in a cyclic order, where $w_{1}, w_{2} \in V(C_{1})$ and $w_{3}, w_{4} \in V(C_{2})$. We observe that the face $D$ incident with $ww_{2}$ and $ww_{3}$ is also incident with $ww_{1}$ and $ww_{4}$, thus $D$ must be a $6$-face with boundary $ww_{1}w_{2}ww_{3}w_{4}w$. Since $G$ is a simple $4$-regular graph, none of the edges $ww_{1}, ww_{2}, ww_{3}$ and $ww_{4}$ is incident with a $3$-face. Consequently, $D$ can only send charges through the edges $w_{1}w_{2}$ or $w_{3}w_{4}$, which implies that $\mu'(D) \geq \mu(D) - 2 \times \frac{1}{2} = 1 > 0$, leading to a contradiction. Therefore, $G$ has no cut-vertices, and is a $2$-connected $4$-regular graph. 
\end{proof}

\begin{corollary}[Cai \etal \cite{MR2682511}]
Every connected toroidal graph without $5$-cycles has minimum degree at most $3$ unless it is a $4$-regular graph. 
\end{corollary}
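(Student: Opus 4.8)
The plan is to read this off directly from \autoref{MLONE}. The point is that a $5$-cycle-free graph automatically avoids both forbidden configurations in \autoref{NOA}: configuration \autoref{fig:subfig:a-} is a $5$-cycle with one chord, and configuration \autoref{fig:subfig:b-} is a $7$-cycle with two chords emanating from a common vertex, which together with a length-$3$ arc of the $7$-cycle bound a $5$-cycle. Hence each of the two configurations contains a copy of $C_5$, so no graph without $5$-cycles can contain a subgraph isomorphic to either of them.

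With that observation the corollary is immediate: if $G$ is a connected toroidal graph without $5$-cycles, then $G$ has no subgraph isomorphic to the configurations in \autoref{NOA}, so \autoref{MLONE} gives that either the minimum degree of $G$ is at most three, or $G$ is a $2$-connected $4$-regular graph with $\epsilon(G)=0$; in the second case $G$ is in particular $4$-regular, which is precisely the stated exceptional case.

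There is essentially no obstacle here. The corollary is strictly weaker than the $5$-cycle-free specialization of \autoref{MLONE}, since it discards the conclusions that $G$ is $2$-connected and that $\epsilon(G)=0$. The only point that takes a moment is checking that the cycle one extracts from \autoref{fig:subfig:b-} has length exactly five (the configuration has seven vertices), which is a one-line inspection of the figure. Alternatively, one could rerun the discharging argument of \autoref{MLONE} with $5$-faces and $5$-cycles entirely absent, which would even simplify the rules; but citing \autoref{MLONE} is the shortest route.
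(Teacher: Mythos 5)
Your proposal is correct and matches the paper's intent exactly: the paper states this as an unproved corollary of \autoref{MLONE}, relying on precisely the observation you make, namely that both configurations in \autoref{NOA} contain a $5$-cycle (the first is a chorded $5$-cycle, and in the second the two chords at the apex together with the three-edge arc form a $5$-cycle), so a $5$-cycle-free toroidal graph satisfies the hypothesis of \autoref{MLONE} and the weaker conclusion follows.
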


The following corollary is a direct consequence of \cref{MLONE}, which is stronger than that every planar graph without $5$-cycles is $3$-degenerate \cite{MR1889505}.
\begin{corollary}\label{3D}
Every planar graph without the configurations in \cref{NOA} is 3-degenerate. 
\end{corollary}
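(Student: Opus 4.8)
The plan is to derive \autoref{3D} directly from \autoref{MLONE}; the only content of the proof is to record that the hypothesis is hereditary and that the exceptional case of \autoref{MLONE} never occurs in the plane.

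First I would check that the property in question passes to subgraphs. If $G$ is planar and has no subgraph isomorphic to either configuration in \autoref{NOA}, and $\Gamma \subseteq G$, then $\Gamma$ is again planar (deleting vertices and edges preserves planarity) and again has no subgraph isomorphic to a configuration in \autoref{NOA}, since such a subgraph of $\Gamma$ would also be a subgraph of $G$. Because a graph is $3$-degenerate exactly when every nonempty subgraph has a vertex of degree at most three, it therefore suffices to prove that every nonempty planar graph without a subgraph from \autoref{NOA} has a vertex of degree at most three.

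Next I would reduce to the connected case and apply \autoref{MLONE}. Let $\Gamma$ be a nonempty planar graph with no forbidden subgraph, and let $C$ be any connected component of $\Gamma$. Then $C$ is a connected toroidal graph (every planar graph embeds in the torus) with no subgraph isomorphic to a configuration in \autoref{NOA}, so \autoref{MLONE} gives that either $C$ has minimum degree at most three, or $C$ is a $2$-connected $4$-regular graph with $\epsilon(C) = 0$. The second possibility is impossible here: $C$ is planar, so in a plane embedding Euler's formula yields $\epsilon(C) = 2 \neq 0$. Hence $C$, and therefore $\Gamma$, has a vertex of degree at most three, which completes the argument. I do not expect any genuine obstacle; the statement is essentially a one-line corollary once one observes that the ``$2$-connected $4$-regular, $\epsilon = 0$'' exception of \autoref{MLONE} cannot be realized by a planar graph.
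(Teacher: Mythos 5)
Your proof is correct and is exactly the intended derivation: the paper states \autoref{3D} as a direct consequence of \autoref{MLONE}, and your write-up just makes explicit the two routine points (heredity of the hypotheses under taking subgraphs, and the fact that the exceptional $2$-connected $4$-regular case with $\epsilon = 0$ cannot be planar since a plane embedding forces $\epsilon = 2$).
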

\MRONEPLANAR*
\begin{proof}
Assume that $G$ serves as a counterexample to \cref{MRONEPLANAR} with minimum number of vertices. It is observed that $G$ is connected, and $(H, f)$ is a minimal non-strictly $f$-degenerate pair. By \cref{3D}, the minimum degree of $G$ is at most $3$, contradicting \cref{L}\ref{M1}. 
\end{proof}

\begin{remark}
Note that not every toroidal graph without subgraphs isomorphic to the configurations in \cref{NOA} is $3$-degenerate. For example, the Cartesian product of an $m$-cycle and an $n$-cycle is a 2-connected 4-regular graph with Euler characteristic $\epsilon(G) = 0$. 
\end{remark}

We recall our main result regarding certain toroidal graphs. 
\MRONE*
\begin{proof}
Suppose that $G$ is a counterexample to \cref{MRONE} with minimum number of vertices. It is observed that $G$ is connected and $(H, f)$ is a minimal non-strictly $f$-degenerate pair. By \cref{MLONE}, the minimum degree of $G$ is at most $3$ or it is a 2-connected $4$-regular graph with Euler characteristic $\epsilon(G) = 0$. By \cref{L}\ref{M1}, the minimum degree of $G$ must be at least $4$, implying that $G$ is indeed a 2-connected $4$-regular graph with Euler characteristic $\epsilon(G) = 0$. Here, we have that 
\[
V(G) = \mathscr{D} = \big\{\,v \mid f(v, 1) + f(v, 2) + \dots + f(v, s) \geq \deg_{G}(v)\,\big\}. 
\]
Note that $G$ is neither a cycle nor a $4$-regular complete graph. On the other hand, $(H, f)$ is not a monoblock, which contradicts \cref{MR}. 
\end{proof}

\begin{corollary}[Cai \etal \cite{MR2682511}]
(i) Every toroidal graph without $3$-cycles is $4$-choosable. (ii) Every toroidal graph without $5$-cycles is $4$-choosable. 
\end{corollary}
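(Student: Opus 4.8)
The plan is to realize both statements as instances of \autoref{MRONE}, after translating list coloring into the transversal language. Let $G$ be a toroidal graph that is triangle-free for part~(i) and $5$-cycle-free for part~(ii); since choosability can be verified component by component, assume $G$ is connected. Given a list assignment $L$ with $|L(v)|\ge 4$ for every $v$, shrink each list to size exactly $4$ (deleting surplus colors only makes coloring harder) and label its elements $(v,1),\dots,(v,4)$. Form the cover $H$ with $\kappa=4$ whose vertex set is $\bigcup_{v}L_{v}$, where for each edge $uv\in E(G)$ the matching $\mathscr{M}_{uv}$ joins $(u,i)$ to $(v,j)$ exactly when these labels denote the same color of $L(u)\cap L(v)$; this is a partial matching (empty when the lists are disjoint). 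Set $f\equiv 1$, so $f(v,1)+f(v,2)+f(v,3)+f(v,4)=4$ for all $v$. A transversal $R$ of $H$ is precisely a choice of one admissible color per vertex, and since an edge $xy$ of $H[R]$ would give a two-vertex subgraph in which both endpoints have degree $1\not<1=f$, such a transversal is strictly $f$-degenerate if and only if $H[R]$ has no edge, i.e.\ the corresponding coloring is proper. Hence it suffices to produce a strictly $f$-degenerate transversal of $(H,f)$.

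Next I would verify the two hypotheses of \autoref{MRONE}. Each of the two graphs in \autoref{NOA} contains a triangle --- $\{A,B,H\}$ in \autoref{fig:subfig:a-} and $\{A,X,H\}$ in \autoref{fig:subfig:b-} --- as well as a $5$-cycle on the vertices $\{A,H,B,C,D\}$ with edges $AH,HB,BC,CD,DA$, visible directly in both figures; consequently a triangle-free $G$, respectively a $5$-cycle-free $G$, has no subgraph isomorphic to either configuration, which is the first hypothesis. For the second, let $S\subseteq V(G)$ be such that $G[S]$ is a $2$-connected $4$-regular graph. Since $f\equiv 1$, no vertex of $H_{S}$ is deleted in passing to the kernel, so the kernel of $H_{S}$ equals $H_{S}$ and has $4|S|$ vertices, whereas $G[S]$ has only $|S|<4|S|$ vertices; thus the kernel of $H_{S}$ is not isomorphic to $G[S]$ and $(H_{S},f)$ is not a monoblock. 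With both hypotheses in hand, \autoref{MRONE} yields a strictly $f$-degenerate transversal of $H$, equivalently a proper $L$-coloring of $G$. As $L$ was arbitrary, $G$ is $4$-choosable.

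The reduction and the forbidden-configuration check are routine; the only step that requires a moment's thought is the monoblock clause of \autoref{MRONE}. That clause is present because, by \autoref{MLONE} together with \autoref{MR}, the $2$-connected $4$-regular graphs carrying a monoblock are exactly the obstructions that survive once no vertex of small degree can be found, so one must make sure a list-coloring cover never produces one. The normalization $|L(v)|=4$ settles this through the vertex count above, and the count is robust: any $f\colon V(H)\to\{0,1\}$ with $f(v,1)+\dots+f(v,\kappa)\ge 4$ still leaves at least $4|S|>|S|$ vertices in the kernel of $H_{S}$. Apart from this bookkeeping the proof is just an appeal to \autoref{MRONE}; incidentally the same argument shows that such graphs $G$ are in fact DP-$4$-colorable.
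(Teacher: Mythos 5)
Your proof is correct and is essentially the argument the paper intends: the corollary is stated as a direct consequence of \autoref{MRONE}, and you supply exactly the standard reduction from list coloring to strictly $f$-degenerate transversals (with $f\equiv 1$ and $\kappa=4$), the observation that both configurations of \autoref{NOA} contain a triangle and a $5$-cycle, and the vertex-count argument showing the kernel of $H_S$ can never be isomorphic to the $4$-regular graph $G[S]$, so no monoblock arises. No gaps.
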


Recall the structural properties of toroidal graphs without $4$-cycles. 
\MLTWO*
\begin{proof}
Suppose that $G$ is a connected toroidal graph, and it has the properties: (1) it has no $4$-cycles; (2) the minimum degree is at least $4$; and (3) it contains no $k$-cap-subgraphs for any $k \geq 5$. Since $G$ has no $4$-cycles, there are no $3$-cap-subgraphs or $4$-cap-subgraphs. We may assume that $G$ has been 2-cell embedded in the plane or torus. 

A vertex is \textbf{bad} if it is a vertex of degree $4$ and it is incident with two $3$-faces; otherwise, the vertex is \textbf{good}. A $3$-face is called a \textbf{bad $3$-face} if it is incident with a bad vertex. Let's construct $H = H(G)$, where $V(H)$ is the set of $3$-faces in $G$ incident with at least one bad vertex, and $uv \in E(H)$ if and only if the two corresponding $3$-faces in $G$ share a common bad vertex. 

We claim that: 
\begin{enumerate}
\item[($\ast$)] The graph $H(G)$ has maximum degree at most $3$. Every component of $H(G)$ is a cycle or a tree. 
\end{enumerate}
\begin{proof}[Proof of ($\ast$)]
Note that each bad vertex in $G$ corresponds to an edge in $H(G)$, thus each component of $H(G)$ contains at least one edge. Since each $3$-face is incident with at most three bad vertices, the maximum degree of $H(G)$ is at most $3$. Furthermore, as $G$ doesn't have any $k$-cap-subgraphs, there is no vertex $v$ in $H(G)$ such that it has three neighbors and is contained in a cycle. Hence, each component of $H(G)$ is a cycle or a tree. 
\end{proof}

We assign each vertex $v \in V(G)$ an initial charge $\mu(v) = \deg(v) - 6$, and similarly, each face $f \in F(G)$ an initial charge $\mu(f) = 2\deg(f) - 6$. According to Euler's formula, we have 
\begin{equation}\label{Euler}
\sum_{v\,\in\,V(G)}\big(\deg(v) - 6\big) + \sum_{f\,\in\,F(G)}\big(2\deg(f) - 6\big) = - 6\epsilon(G) \leq 0. 
\end{equation}
Next, we design a discharging procedure to get a final charge $\mu'(x)$ for each $x \in V(G) \cup F(G)$. In the context of discharging, we introduce the concept of a \textbf{bank}. For each component $D$ of $H(G)$, we allocate a bank $B(D)$, carrying an initial charge of zero. A good vertex is \textbf{incident with a bank $B(D)$} if it is incident with a bad $3$-face $f$ in $G$, and $f$ is a vertex in the component $D$ of $H(G)$. 

\begin{enumerate}[label = \textbf{R\arabic*.}, ref = R\arabic*]
\item\label{R1} Each face uniformly distributes its initial charge to all incident vertices.
\item\label{R2} Each good vertex $v$ sends $\frac{2}{5}$ to each incident bank via each incident bad $3$-face. 
\item\label{R3} For each component $D$ of $H(G)$, the corresponding bank $B(D)$ sends $\frac{2}{5}$ to each bad vertex in $G$ that corresponds to an edge in $D$. 
\end{enumerate}

A face $f$ with $\deg(f) \geq 5$ sends charge $\frac{\mu(f)}{\deg(f)} = \frac{2\deg(f) - 6}{\deg(f)} \geq \frac{4}{5}$ to each incident vertex. Since $G$ has no $4$-cycles, there are neither $4$-faces nor adjacent $3$-faces. This implies that each vertex $v$ is incident with at most $\lfloor\frac{\deg(v)}{2}\rfloor$ banks. 

Consider any vertex $v$. If $\deg(v) \geq 5$, then $v$ receives at least $\frac{4}{5} \times \lceil \frac{\deg(v)}{2} \rceil$ from its incident $5^{+}$-faces by \ref{R1}, and sends at most $\frac{2}{5} \times \lfloor\frac{\deg(v)}{2} \rfloor$ to its associated banks by \ref{R2}, which implies that $\mu'(v) \geq \deg(v) - 6 + \frac{4}{5} \times \lceil \frac{\deg(v)}{2} \rceil - \frac{2}{5} \times \lfloor\frac{\deg(v)}{2} \rfloor > 0$. If $v$ is a bad $4$-vertex, then it receives at least $\frac{4}{5}$ from each incident $5^{+}$-face by \ref{R1}, and $\frac{2}{5}$ from its associated bank by \ref{R3}, yielding $\mu'(v) \geq 4 - 6 + 2 \times \frac{4}{5} + \frac{2}{5} = 0$. If $v$ is a good $4$-vertex, then it is incident with at most one $3$-face, and then it receives $\frac{4}{5}$ from each incident $5^{+}$-face by \ref{R1}, and sends at most $\frac{2}{5}$ to its incident bank by \ref{R2}, which implies that $\mu'(v) \geq 4 - 6 + 3 \times \frac{4}{5} - \frac{2}{5} = 0$. 

Let $D$ be a component of $H(G)$ and $D$ be a cycle with $n$ vertices. Since $D$ is a cycle, each $3$-face in $G$ corresponding to a vertex in $D$ must be incident with a good vertex, and each such good vertex sends $\frac{2}{5}$ to each associated bank via each incident bad $3$-face by \ref{R2}. Thus, the final charge of the bank $B(D)$ is $\frac{2}{5}n - \frac{2}{5}n = 0$ by \ref{R2} and \ref{R3}. 

Let $D$ be a component of $H(G)$ and $D$ be a tree with $n$ vertices. For each $i$ in $\{1, 2, 3\}$, let $\alpha_{i}$ denote the number of vertices of degree $i$ in $D$. Each $3$-face in $G$ that corresponds to a vertex of degree $1$ in $D$ is incident with two good vertices, and each $3$-face in $G$ that corresponds to a vertex of degree $2$ in $D$ is incident with precisely one good vertex. Consequently, the bank $B(D)$ receives $\frac{2}{5} \times 2\alpha_{1} + \frac{2}{5} \times \alpha_{2} = \frac{4}{5} \alpha_{1} + \frac{2}{5} \alpha_{2}$ by \ref{R2}, and sends out $\frac{2}{5} |E(D)| = \frac{2}{5}(n - 1)$ by \ref{R3}. Since $n = \alpha_{1} + \alpha_{2} + \alpha_{3}$ and $2(n - 1) = \alpha_{1} + 2\alpha_{2} + 3\alpha_{3}$, we have that $2\alpha_{1} + \alpha_{2} = n + 2$. Therefore, the final charge of the bank $B(D)$ is $\frac{4}{5} \alpha_{1} + \frac{2}{5} \alpha_{2} - \frac{2}{5}(n - 1) = \frac{6}{5} > 0$. 

The discharging procedure preserves the total charge, so the sum of the final charges should be zero according to \eqref{Euler}. This implies that $G$ is $4$-regular and no component of $H(G)$ is a tree. Since $G$ has no $k$-cap-subgraphs for any $k \geq 3$, no component of $H(G)$ is a cycle. Hence, $H(G)$ does not exist and every vertex in $G$ is good. However, for every vertex $v$ with degree $4$, the final charge is $4 - 6 + 3 \times \frac{4}{5} > 0$ by \ref{R1}, a contradiction. 
\end{proof}

\MRTWO*
\begin{proof}
Suppose that $G$ is a counterexample to \cref{MRTWO} with the minimum number of vertices. It is observed that $G$ is connected and $(H, f)$ is a minimal non-strictly $f$-degenerate pair. By \cref{MLTWO,L}\ref{M1}, $G$ must contain a $k$-cap-subgraph $F$ for some $k \geq 5$. Note that 
\[
V(F) \subseteq \mathscr{D} = \big\{\,v \mid f(v, 1) + f(v, 2) + \dots + f(v, s) \geq \deg_{G}(v)\,\big\}. 
\]
Moreover, $G[V(F)]$ is $2$-connected, and it is neither a cycle nor a complete graph. On the other hand, there exists a vertex $v \in V(F)$ such that $\deg_{G[V(F)]}(v) \geq 3 > \max_{q} f(v, q)$, which contradicts \cref{L}\ref{M2}.  
\end{proof}

\section{Results on planar graphs}\label{sec:NA}
Lam \etal \cite{MR1687318} showed that every planar graph without $4$-cycles has minimum degree at most $3$ unless it contains a $5$-cap-subgraph. Borodin and Ivanova \cite{MR2586624} further improved this by showing every planar graph without triangles adjacent to $4$-cycles has minimum degree at most $3$ unless it contains a $5$-cap-subgraph. Kim and Yu \cite{MR3969022}  reiterated this structure and showed that every planar graph without triangles adjacent to $4$-cycles is DP-$4$-colorable. 

Borodin and Ivanova \cite{MR2586624} showed that every planar graph without triangles adjacent to a $4$-cycle is $4$-choosable. Additionally, Xu and Wu \cite{MR3638001} showed that a planar graph without $5$-cycles simultaneously adjacent to $3$-cycles and $4$-cycles is also $4$-choosable. Actually, they gave the following stronger structural result. 

\begin{figure}%
\centering
\subcaptionbox{\label{fig:subfig:a}}
{\begin{tikzpicture}
\coordinate (A) at (45:1);
\coordinate (B) at (135:1);
\coordinate (C) at (225:1);
\coordinate (D) at (-45:1);
\coordinate (H) at (90:1.414);
\draw (A)--(H)--(B)--(C)--(D)--cycle;
\draw (A)--(B);
        \foreach \pt in {A, B, C, D, H} {
            \node[circle, inner sep = 1.5, fill = white, draw] at (\pt) {};
        }
\end{tikzpicture}}\hspace{1.5cm}
\subcaptionbox{\label{fig:subfig:b}}
{\begin{tikzpicture}
\coordinate (A) at (45:1);
\coordinate (B) at (135:1);
\coordinate (C) at (225:1);
\coordinate (D) at (-45:1);
\coordinate (H) at (90:1.414);
\coordinate (X) at ($(A)+(0, 1)$);
\coordinate (Y) at (45:2);
\draw (A)--(Y)--(X)--(H)--(B)--(C)--(D)--cycle;
\draw (X)--(A)--(H);
        \foreach \pt in {A, B, C, D, H, X, Y} {
            \node[circle, inner sep = 1.5, fill = white, draw] at (\pt) {};
        }
\end{tikzpicture}}
\caption{Forbidden configurations in \cref{F35-Subgraph}.}
\label{A345}
\end{figure}
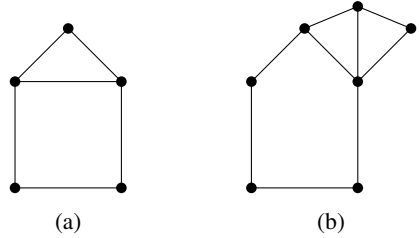

\begin{theorem}[Xu and Wu \cite{MR3638001}]\label{F35-Subgraph}
If $G$ is a planar graph without subgraphs isomorphic to the configurations as depicted in \cref{A345}, then it has minimum degree at most $3$, unless it contains a $5$-cap-subgraph.  
\end{theorem}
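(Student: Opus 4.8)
The plan is to argue by contradiction using a discharging argument, following the same template as the proof of \autoref{MLTWO}. Suppose $G$ is a counterexample: a planar graph with no subgraph isomorphic to either configuration in \autoref{A345}, with minimum degree at least four, and containing no $5$-cap-subgraph. We may assume $G$ is $2$-cell embedded in the plane, so Euler's formula gives $\sum_{v}(\deg(v)-6) + \sum_{f}(2\deg(f)-6) = -12 < 0$. I would assign the initial charge $\mu(v) = \deg(v)-6$ to each vertex and $\mu(f) = 2\deg(f)-6$ to each face; the goal is to redistribute charge so that every vertex and face ends with nonnegative charge, contradicting the strict negativity of the total.

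First I would record the local structural consequences of the two forbidden configurations. Configuration~\autoref{fig:subfig:a} is a triangle sharing an edge with a $5$-face (a triangle adjacent to a $5$-cycle through that edge), so forbidding it means no $3$-face is adjacent to a $5$-face; combined with the absence of a $5$-cap-subgraph this tightly controls the faces a bad $4$-vertex can see. Configuration~\autoref{fig:subfig:b} is a more elaborate configuration built on the same triangle--$5$-cycle skeleton with an extra pendant path, and forbidding it restricts how two bad $3$-faces sharing a $4$-vertex can simultaneously be surrounded by short faces. As in \autoref{MLTWO}, I would call a $4$-vertex \emph{bad} if it lies on two $3$-faces, a $3$-face \emph{bad} if it contains a bad vertex, and build an auxiliary graph $H(G)$ on the bad $3$-faces with an edge for each shared bad vertex; the forbidden configurations should force every component of $H(G)$ to be a path or a cycle of bounded type, exactly the step marked $(\ast)$ in the proof of \autoref{MLTWO}.

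For the discharging rules I would use the same "bank" mechanism: each face pushes its charge uniformly to incident vertices, each good vertex feeds $\tfrac{2}{5}$ (or whatever fraction the arithmetic dictates — the precise constant will have to be tuned against the new forbidden configurations) into each incident bank via each incident bad $3$-face, and each bank pays $\tfrac{2}{5}$ out to each bad vertex corresponding to one of its edges. One then checks: $5^+$-faces end nonnegative because $\tfrac{2\deg(f)-6}{\deg(f)} \ge \tfrac{4}{5}$; $5^+$-vertices end positive by a counting of incident banks; good $4$-vertices and bad $4$-vertices each break even; and for each component $D$ of $H(G)$ the bank $B(D)$ ends nonnegative, with the path case even strictly positive (using $2\alpha_1+\alpha_2 = |V(D)|+2$ for a tree). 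The sharpest point, as in \autoref{MLTWO}, is that equality everywhere would force $G$ to be $4$-regular with $H(G)$ consisting only of cycles, and the absence of the forbidden configurations (which encode exactly the way a $4$-regular graph could wrap a cycle of bad $3$-faces) together with the no-$5$-cap hypothesis rules that out.

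The main obstacle I anticipate is handling Configuration~\autoref{fig:subfig:b} — the non-triangle forbidden subgraph — because, unlike the clean "triangle adjacent to $5$-cycle" statement, it is a size-$8$ configuration whose absence must be translated into a usable constraint on the faces around a bad vertex that sits on a $5$-face reachable from a $3$-face by one more step. Getting the discharging constants (the $\tfrac{2}{5}$ and the face contributions) to close simultaneously for the bad-$4$-vertex case, the bank-component case, and the extremal $4$-regular case is where the real work lies; if the straightforward constants do not close, I would refine the bank rule so that banks adjacent to a $5$-face (forbidden by \autoref{fig:subfig:b} in certain patterns) receive an extra increment, or split the analysis of bad $3$-faces according to whether their third incident face is a $5$-face or a $6^+$-face. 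Once the extremal analysis eliminates the $4$-regular case exactly as in \autoref{MLTWO}, the contradiction with Euler's formula is immediate.
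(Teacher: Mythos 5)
Your proposal has two genuine gaps, one of interpretation and one of method. First, you misread \autoref{fig:subfig:a}: it is not ``a triangle sharing an edge with a $5$-face,'' but a $5$-cycle with a chord, equivalently a triangle glued along an edge to a $4$-cycle on five distinct vertices. Its absence therefore does \emph{not} say that no $3$-face is adjacent to a $5$-face (a triangle sharing one edge with a $5$-cycle is a $6$-vertex configuration that contains no chorded $5$-cycle); what it actually yields is that every $5$-cycle is chordless, hence no $3$-face is adjacent to a $4$-face and each $3$-face is adjacent to at most one other $3$-face. Likewise \autoref{fig:subfig:b} is a $7$-vertex configuration in which the second triangle shares an \emph{edge} (not merely a vertex) with the first. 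Since everything in a discharging proof is driven by the local consequences of the forbidden subgraphs, these misreadings invalidate the case analysis you sketch.

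Second, the template you chose---the proof of \autoref{MLTWO}, with $\mu(v)=\deg(v)-6$, $\mu(f)=2\deg(f)-6$ and the bank mechanism---depends essentially on the absence of $4$-cycles: that hypothesis is what guarantees there are no $4$-faces (so every non-triangular face can afford $\tfrac{4}{5}$ per incident vertex), no two adjacent $3$-faces, and no $3$-cap-subgraphs (which underlies the claim $(\ast)$ about $H(G)$ and the final elimination of its cycle components). None of this is available here: forbidding the configurations in \autoref{A345} still permits $4$-faces, adjacent $3$-faces, and two triangles sharing an edge with all four vertices of degree four. Concretely, a $4$-face contributes only $\tfrac{1}{2}$ per incident vertex, so a good $4$-vertex incident to a bad $3$-face, two $5^{+}$-faces and one $4$-face receives at most $\tfrac{8}{5}+\tfrac{1}{2}$ and pays $\tfrac{2}{5}$ to a bank, ending at $-\tfrac{3}{10}$; this cannot be repaired by ``tuning the constants,'' and the transfer of $(\ast)$ is asserted, not proved. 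The statement is in fact a cited result of Xu and Wu, which this paper obtains as a corollary of the stronger \autoref{MLTHREE} (each configuration in \autoref{E} contains one from \autoref{A345}); the working argument there uses the other normalization $\mu(v)=\deg(v)-4$, $\mu(f)=\deg(f)-4$, rules in which $5^{+}$-faces and $5^{+}$-vertices feed the $3$-faces, a classification of $5$-faces by the number of adjacent $3$-faces, and---crucially---the absence of the $5$-cap-subgraph to show that a $5$-face incident to five $4$-vertices never has to pay $\tfrac{1}{2}$ across an edge shared by two $3$-faces. Your write-up neither identifies this mechanism nor completes its own; its decisive steps are explicitly deferred, so it remains a plan rather than a proof.
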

Here, we revisit the structural result on certain planar graphs, which refines \cref{F35-Subgraph}.
\MLTHREE*
\begin{proof}
Suppose that $G$ is a counterexample to \cref{MLTHREE}. We may assume that it is connected and it has been $2$-cell embedded in the plane. Since each $5$-cycle has no chords, each $3$-face is adjacent to at most one $3$-face and at least two $5^{+}$-faces. 

We define an initial charge function by setting $\mu(v) = \deg(v) - 4$ for every $v \in V(G)$ and $\mu(f) = \deg(f) - 4$ for every $f \in F(G)$. By Euler's  formula, the sum of the initial charges equals $-8$. That is, 
\begin{equation}\label{Euler2}
\sum_{v\,\in\,V(G)}\big(\deg(v) - 4\big) + \sum_{f\,\in\,F(G)}\big(\deg(f) - 4\big) = -8. 
\end{equation}

We define an \textbf{$(a, b)$-edge} as an edge with endpoints having degree $a$ and $b$. Let $f$ be a $5$-face incident with at least one $5^{+}$-vertex. If $f$ is adjacent to five $3$-faces, then we call $f$ a \textbf{$\bm{5_{\mathrm{a}}}$-face}. If $f$ is adjacent to precisely four $3$-faces, then we call $f$ a \textbf{$\bm{5_{\mathrm{b}}}$-face}. If $f$ is adjacent to three $3$-faces and one of the $3$-faces is adjacent to another $3$-face via a $(4, 4)$-edge, then we call $f$ a \textbf{$\bm{5_{\mathrm{c}}}$-face}. Note that the $5_{\mathrm{c}}$-face can only be as illustrated in \cref{fig:subfig:5c}. 

If $f = [w_{1}w_{2}w_{3}w_{4}w_{5}]$ is a $5$-face incident with five $4$-vertices and adjacent to a $3$-face $[ww_{1}w_{2}]$ and $w$ is a $5^{+}$-vertex, then we call $w$ a \textbf{related source} of $f$ and $f$ a \textbf{sink} of $w$. 

\begin{figure}%
\centering
\subcaptionbox{$5_{\mathrm{a}}$-face\label{fig:subfig:5a}}{\begin{tikzpicture}
\coordinate (A) at (126:1);
\coordinate (B) at (198:1);
\coordinate (C) at (270:1);
\coordinate (D) at (342:1);
\coordinate (E) at (54:1);
\coordinate (A') at (162:1.15);
\coordinate (B') at (234:1.15);
\coordinate (C') at (306:1.15);
\coordinate (D') at (18:1.15);
\coordinate (E') at (90:1.15);
\filldraw[fill=gray] (A)--(B)--(C)--(D)--(E)--cycle;
\draw (A)--(A')--(B)--(B')--(C)--(C')--(D)--(D')--(E)--(E')--cycle;
        \foreach \pt in {A, B, C, D, E, A', B', C', D', E'} {
            \node[circle, inner sep = 1.5, fill = white, draw] at (\pt) {};
        }
\end{tikzpicture}}\hspace{1.5cm}
\subcaptionbox{$5_{\mathrm{b}}$-face\label{fig:subfig:5b}}{\begin{tikzpicture}
\coordinate (A) at (126:1);
\coordinate (B) at (198:1);
\coordinate (C) at (270:1);
\coordinate (D) at (342:1);
\coordinate (E) at (54:1);
\coordinate (A') at (162:1.15);
\coordinate (B') at (234:1.15);
\coordinate (C') at (306:1.15);
\coordinate (D') at (18:1.15);
\coordinate (E') at (90:1.15);
\filldraw[fill=gray] (A)--(B)--(C)--(D)--(E)--cycle;
\draw (A)--(A')--(B)--(B')--(C)--(C')--(D)--(D')--(E);
        \foreach \pt in {A, B, C, D, E, A', B', C', D'} {
            \node[circle, inner sep = 1.5, fill = white, draw] at (\pt) {};
        }
\end{tikzpicture}}\hspace{1.5cm}
\subcaptionbox{$5_{\mathrm{c}}$-face\label{fig:subfig:5c}}{\begin{tikzpicture}
\coordinate (A) at (126:1);
\coordinate (B) at (198:1);
\coordinate (C) at (270:1);
\coordinate (D) at (342:1);
\coordinate (E) at (54:1);
\coordinate (A') at (162:1.15);
\coordinate (B') at (234:1.15);
\coordinate (C') at (306:1.15);
\coordinate (D') at (18:1.15);
\coordinate (E') at (90:1.15);
\coordinate (T) at ($(E)!(A)!(E')$);
\coordinate (Z) at ($(A)!2!(T)$); 
\filldraw[fill=gray] (A)--(B)--(C)--(D)--(E)--cycle;
\draw (B)--(B')--(C)--(C')--(D);
\draw (E) node[right]{\small$4$}--(E') node[left]{\small$4$}--(A);
\draw (E)--(Z)--(E');
        \foreach \pt in {A, B, C, D, E, B', C', E', Z} {
            \node[circle, inner sep = 1.5, fill = white, draw] at (\pt) {};
        }
\end{tikzpicture}}
\caption{An illustration of $5_{\mathrm{a}}$-face, $5_{\mathrm{b}}$-face and $5_{\mathrm{c}}$-face.}
\end{figure}
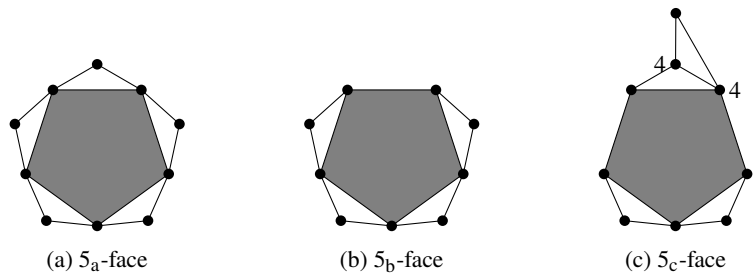

\begin{enumerate}[label = \textbf{R\arabic*.}, ref = R\arabic*]
\item\label{Rule1-} If a $3$-face is adjacent to three $5^{+}$-faces, then it receives $\frac{1}{3}$ from each adjacent face. 
\item\label{Rule2-} Let $w_{1}w_{2}$ be incident with two $3$-faces $f = [w_{1}w_{2}w_{3}]$ and $g = [w_{1}w_{2}w_{4}]$. If $\deg(w_{1}) = \deg(w_{2}) = 4$, then $f$ receives $\frac{1}{2}$ from each adjacent $5^{+}$-face; otherwise, $f$ receives $\frac{1}{3}$ from each adjacent $5^{+}$-face and $\frac{1}{3}$ from each $5^{+}$-vertex in $\{w_{1}, w_{2}\}$. 
\item\label{S} If $f$ is a $5$-face incident with five $4$-vertices and adjacent to at least four $3$-faces, then $f$ receives $\frac{1}{6}$ from each of its related sources via the adjacent $3$-face. 
\item\label{5a} If $f$ is a $5_{\mathrm{a}}$-face, then it receives $\frac{2}{3\#}$ from each incident $5^{+}$-vertex, where $\#$ is the number of incident $5^{+}$-vertices. 
\item\label{5b} If $f$ is a $5_{\mathrm{b}}$-face, then it receives $\frac{1}{3\#}$ from each incident $5^{+}$-vertex, where $\#$ is the number of incident $5^{+}$-vertices.
\item\label{5c} If $f$ is a $5_{\mathrm{c}}$-face, then it receives $\frac{1}{6}$ from each incident $5^{+}$-vertex.
\end{enumerate}

\textbf{The final charge of each face is nonnegative.} A $3$-face $f$ adjacent to three $5^{+}$-faces receives $\frac{1}{3}$ from each adjacent $5^{+}$-face, resulting in $\mu'(f) = 3 - 4 + 3 \times \frac{1}{3} = 0$ by \ref{Rule1-}. When a $3$-face $f$ is adjacent to one $3$-face and two $5^{+}$-faces, then $\mu'(f) \geq 3 - 4 + 2 \times \frac{1}{2} = 0$ or $\mu'(f) \geq 3 - 4 + 3 \times \frac{1}{3} = 0$ by \ref{Rule2-}. As $4$-faces are not involved in the discharging procedure, their final charges is zero. 

Let $f$ be a $5$-face incident with five $4$-vertices, and let $\alpha$ be the number of adjacent $3$-faces. Since $G$ has no $5$-cap-subgraphs, $f$ sends $\frac{1}{3}$ to each adjacent $3$-face by \ref{Rule1-} and \ref{Rule2-}. If $\alpha \leq 3$, then $\mu'(f) = 5 - 4 - \alpha \times \frac{1}{3} \geq 0$. If $\alpha \geq 4$, then $f$ receives $\frac{1}{6}$ from each related source, which implies that $\mu'(f) = 5 - 4 - \alpha \times \frac{1}{3} + \alpha \times \frac{1}{6} > 0$ according to \ref{Rule1-} and \ref{S}. Thus, we may assume that $f$ is incident with at least one $5^{+}$-vertex. 

If $f$ is a $5_{\mathrm{a}}$-face, then it sends $\frac{1}{3}$ to each adjacent $3$-face, yielding $\mu'(f) = 5 - 4 - 5 \times \frac{1}{3} + \# \times \frac{2}{3\#} = 0$ by \ref{Rule1-} and \ref{5a}. For a $5_{\mathrm{b}}$-face $f$, it sends $\frac{1}{3}$ to each adjacent $3$-face, resulting in $\mu'(f) = 5 - 4 - 4 \times \frac{1}{3} + \# \times \frac{1}{3\#} = 0$ by \ref{Rule1-} and \ref{5b}. A $5_{\mathrm{c}}$-face $f$ sends $\frac{1}{2}$ to an adjacent $3$-face and $\frac{1}{3}$ to each of the other adjacent $3$-faces, implying $\mu'(f) \geq 5 - 4 - \frac{1}{2} - 2 \times \frac{1}{3} + \frac{1}{6} = 0$ by \ref{Rule1-}, \ref{Rule2-} and \ref{5c}. If $f$ is incident with precisely three $3$-faces but it is not a $5_{\mathrm{c}}$-face, then it sends $\frac{1}{3}$ to each adjacent $3$-face, leading to $\mu'(f) = 5 - 4 - 3 \times \frac{1}{3} = 0$ by \ref{Rule1-} and \ref{Rule2-}. If $f$ is incident with at most two $3$-faces, then it sends at most $\frac{1}{2}$ to each adjacent $3$-face, and $\mu'(f) \geq 5 - 4 - 2 \times \frac{1}{2} = 0$ by \ref{Rule1-} and \ref{Rule2-}. 

As shown in \cref{MLONE}, every $6^{+}$-face averagely sends at most $\frac{1}{3}$ via each incident edge. Therefore, each $6^{+}$-face $f$ has a final charge $\mu'(f) \geq \deg(f) - 4 - \deg(f) \times \frac{1}{3} \geq 0$. In particular, every $7^{+}$-face has a positive final charge. 

\medskip\textbf{The final charge of each vertex is nonnegative.} Since $4$-vertices are not involved in the discharging procedure, they have a final charge of zero. Let $v$ be a $6^{+}$-vertex incident with a face $f$. If $f$ is neither a $3$-face nor a $5$-face, then $v$ sends nothing to $f$. If $f$ is a $3$-face, then $v$ may send $\frac{1}{3}$ to $f$ by \ref{Rule2-}, or $\frac{1}{6}$ via $f$ to a sink by \ref{S}. If $f$ is a $5$-face that is not a $5_{\mathrm{a}}$-face incident with precisely one $5^{+}$-vertex, then $v$ sends at most $\frac{1}{3}$ to $f$. If $f$ is a $5_{\mathrm{a}}$-face incident with precisely one $5^{+}$-vertex, then $v$ sends $\frac{2}{3}$ to $f$, but nothing to/via the two $3$-faces adjacent to $f$, otherwise there is a $5$-cap-subgraph. Hence, in all cases, $v$ sends on average at most $\frac{1}{3}$ to/via any incident face, thus $\mu'(v) \geq \deg(v) - 4 - \deg(v) \times \frac{1}{3} \geq 0$. 

Consider a $5$-vertex $v$ incident with five consecutive faces $f_{1}, f_{2}, f_{3}, f_{4}$ and $f_{5}$. We divide the discussions into four cases. 

{\bf(i)} Suppose that $v$ is incident with two adjacent $3$-faces, say $f_{1}$ and $f_{2}$. Since $5$-cycles have no chords, neither $f_{3}$ nor $f_{5}$ can be a $4^{-}$-face. If $f_{3}$ is a $5$-face, then it is adjacent to at most three $3$-faces since \cref{fig:subfig:b--} and \cref{fig:subfig:c--} are not allowed. But $f_{3}$ cannot be a $5_{\mathrm{c}}$-face, thus it receives nothing from $v$. If $f_{3}$ is a $6^{+}$-face, then it also receives nothing from $v$. In summary, $v$ sends nothing to $f_{3}$, and symmetrically sends nothing to $f_{5}$. By the discharging rules, if $f_{4}$ is a $4^{+}$-face, then $v$ sends nothing to $f_{4}$; while $f_{4}$ is a $3$-face, then $v$ may send $\frac{1}{6}$ via $f_{4}$ to a sink. Note that $v$ sends out nothing via $f_{1}$ or $f_{2}$ since there are no configurations as shown in \cref{fig:subfig:b--} and \cref{fig:subfig:c--}. Hence, $\mu'(v) \geq 5 - 4 - 2 \times \frac{1}{3} - \frac{1}{6} > 0$. 

{\bf(ii)} Suppose that $v$ is incident with two non-adjacent $3$-faces, say $f_{1}$ and $f_{3}$. If the common edge between $f_{4}$ and $f_{5}$ is a $(5, 5^{+})$-edge, then $v$ sends at most $\frac{1}{6}$ to each of $f_{4}$ and $f_{5}$ by \ref{5b} and \ref{5c}, and then $v$ sends at most $\frac{1}{3}$ in total to $f_{4}$ and $f_{5}$. If the common edge between $f_{4}$ and $f_{5}$ is a $(5, 4)$-edge, then $v$ sends $\frac{1}{3}$ to one face in $\{f_{4}, f_{5}\}$ and sends nothing to the other face, or $v$ sends at most $\frac{1}{6}$ to each of $f_{4}$ and $f_{5}$, thus $v$ sends at most $\frac{1}{3}$ in total to $f_{4}$ and $f_{5}$. In all subcases, $v$ sends at most $\frac{1}{3}$ in total to $f_{4}$ and $f_{5}$. If $v$ sends $\frac{2}{3}$ to $f_{2}$, then $f_{2}$ must be a $5_{\mathrm{a}}$-face incident with precisely one $5^{+}$-vertex and $v$ cannot be a related source of some faces, which implies $\mu'(v) \geq 5 - 4 - \frac{2}{3} - \frac{1}{3} = 0$. If $f$ sends at most $\frac{1}{3}$ to $f_{2}$, then $v$ could be related sources of two $5$-faces, which implies $\mu'(v) \geq 5 - 4 - 2 \times \frac{1}{3} - 2 \times \frac{1}{6} = 0$. 

{\bf(iii)} Suppose that $v$ is incident with precisely one $3$-face, $f_{1}$. By the discharging rules, $v$ sends at most $\frac{1}{3}$ to each of $f_{2}$ and $f_{5}$, nothing to each of $f_{3}$ and $f_{4}$, and possibly $\frac{1}{6}$ via $f_{1}$ to a sink. Hence, $\mu'(v) \geq 5 - 4 - 2 \times \frac{1}{3} - \frac{1}{6} > 0$. 

{\bf(iv)} If $v$ is not incident with any $3$-face, then $\mu'(v) = \mu(v) = 5 - 4 = 1 > 0$. 
\end{proof}

We recall our main result on certain planar graphs.
\MRTHREE*
\begin{proof}
Suppose that $G$ is a counterexample to \cref{MRTHREE} with the minimum number of vertices. It is observed that $G$ is connected and $(H, f)$ is a minimal non-strictly $f$-degenerate pair. By \cref{MLTHREE,L}\ref{M1}, $G$ must contain a $5$-cap-subgraph $F_{5}$. Note that \[
V(F_{5}) \subseteq \mathscr{D} = \big\{\,v \mid f(v, 1) + f(v, 2) + \dots + f(v, s) \geq \deg_{G}(v)\,\big\}. 
\] 
Moreover, $G[V(F_{5})]$ is $2$-connected and it is neither a cycle nor a complete graph. On the other hand, there exists a vertex $v \in V(F_{5})$ such that $\deg_{G[V(F_{5})]}(v) \geq 3 > \max_{q} f(v, q)$, which contradicts \cref{L}\ref{M2}. 
\end{proof}

\begin{remark}
All the graphs in \cref{E} contain a $3$-cycle, a $4$-cycle and a $5$-cycle, all three of which are mutually adjacent. Therefore, 
\begin{enumerate}[label = (\roman*)]
\item if $G$ is a planar graph without $3$-cycles adjacent to $4$-cycles, then it is DP-$4$-colorable and its list vertex arboricity is at most $2$; 
\item if $G$ is a planar graph without $3$-cycles adjacent to $5$-cycles, then it is DP-$4$-colorable and its list vertex arboricity is at most $2$; 
\item if $G$ is a planar graph without $4$-cycles adjacent to $5$-cycles, then it is DP-$4$-colorable and its list vertex arboricity is at most $2$. 
\end{enumerate}
\end{remark}
\begin{remark}
\cref{MLTHREE} cannot be extended to toroidal graphs; once more, the Cartesian product of an $m$-cycle and an $n$-cycle is a counterexample. Hence, it is interesting to extend \cref{MRTHREE} to toroidal graphs. 
\end{remark}

Kim and Ozeki \cite{MR3802151} pointed out that DP-coloring is also a generalization of signed (list) coloring for a signed graph $(G, \sigma)$, thus \cref{MRTHREE} implies the following conclusion, which partially extends \cite[Theorem 3.5]{MR3425977}. For comprehensive details on signed (list) coloring of signed graph, we refer the reader to \cite{MR3612419, MR3425977, MR3484719, MR3477382}. 

\begin{theorem}
If $(G, \sigma)$ be a signed planar graph and $G$ has no subgraphs isomorphic to the configurations shown in \cref{E}, then $(G, \sigma)$ is signed $4$-choosable. 
\end{theorem}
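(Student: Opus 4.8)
The plan is to deduce this as an immediate corollary of \autoref{MRTHREE}, using the standard encoding of signed list colouring as correspondence (DP-) colouring observed by Kim and Ozeki. First I would fix the dictionary. Let $(G,\sigma)$ be a signed planar graph with no subgraph isomorphic to a configuration in \autoref{E}, and let $L$ be any list assignment with $|L(v)|\ge 4$ for every $v\in V(G)$. Choose a finite set $C\subseteq\mathbb{Z}$ with $-C=C$ containing $\bigcup_{v}L(v)$, and (after a harmless relabelling) take $C$ itself as the index set, so $\kappa=|C|\ge 4$ and $L_{v}=\{\,(v,c)\mid c\in C\,\}$. Build the cover $H$ of $G$ by setting, for each edge $uv$, the matching $\mathscr{M}_{uv}=\{\,(u,c)(v,c')\mid c=\sigma(uv)\,c'\,\}$; since $\sigma(uv)\in\{+1,-1\}$ and $C=-C$, this is a perfect matching between $L_{u}$ and $L_{v}$. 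Finally define $f\colon V(H)\to\{0,1\}$ by $f(v,c)=1$ if $c\in L(v)$ and $f(v,c)=0$ otherwise, so that $f(v,1)+f(v,2)+\dots+f(v,\kappa)=|L(v)|\ge 4$ for each $v$.

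Next I would apply \autoref{MRTHREE} to the valued cover $(H,f)$; this is legitimate because $\{0,1\}\subseteq\{0,1,2\}$ and $G$ avoids the configurations in \autoref{E}, so $H$ has a strictly $f$-degenerate transversal $R$. Because $f$ takes only the values $0$ and $1$, "$H[R]$ strictly $f$-degenerate" forces two things: $R$ contains no vertex $(v,c)$ with $f(v,c)=0$ (a single such vertex is a nonempty subgraph with no vertex of degree $<f=0$), and $H[R]$ is edgeless (any nonempty subgraph containing an edge has all degrees $\ge 1\ge f$). Hence $R=\{\,(v,c(v))\mid v\in V(G)\,\}$ with $c(v)\in L(v)$ for all $v$, and for every edge $uv$ the pair $(u,c(u))(v,c(v))$ lies outside $\mathscr{M}_{uv}$, i.e.\ $c(u)\ne\sigma(uv)\,c(v)$. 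This is exactly a proper colouring of $(G,\sigma)$ with $c(v)\in L(v)$. As $L$ was arbitrary, $(G,\sigma)$ is signed $4$-choosable.

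The genuinely new input here is entirely in \autoref{MLTHREE} and \autoref{MRTHREE}; the present argument is pure translation. The only points requiring care are choosing the colour ground set to be closed under negation so the sign-twisted matchings are honest perfect matchings, and checking that restricting to $f\in\{0,1\}$ makes "strictly $f$-degenerate transversal" coincide with "independent transversal / DP-colouring". I do not expect any real obstacle beyond these conventions: once the dictionary is fixed, the theorem drops out at once, in line with Kim and Ozeki's remark that DP-$m$-colourability implies signed $m$-choosability.
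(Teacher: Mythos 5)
Your proposal is correct and is exactly the argument the paper intends: the paper derives this theorem from \autoref{MRTHREE} via Kim and Ozeki's observation that DP-coloring generalizes signed list coloring, and your construction of the sign-twisted cover with $f(v,c)=1$ iff $c\in L(v)$ is precisely that reduction, spelled out. The two points you flag (a negation-closed ground set so the matchings are well defined, and the collapse of ``strictly $f$-degenerate transversal'' to ``independent transversal'' when $f\in\{0,1\}$) are handled correctly.
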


\vskip 0mm \vspace{0.3cm} \noindent\textbf{Acknowledgments.} The authors greatly appreciate the time and effort expended by the four reviewers in carefully examining our manuscript and providing valuable feedback.

\end{document}